\DeclareMathOperator{\len}{l}
\DeclareMathOperator{\dom}{dom}
\DeclareMathOperator{\range}{range}
\DeclareMathOperator{\ran}{ran}
\def\MPB{{\mathbb{P}}}
\def\MQB{{\mathbb{Q}}}
\def\MRB{{\mathbb{R}}}
\def\MCB{{\mathbb{C}}}
\def\ds{\displaystyle}
\def\k{\kappa}
\def\sse{\subseteq}
\def\ss{\subset}
\def\l{\lambda}
\def\lan{\langle}
\def\ran{\rangle}
\def\a{\alpha}
\def\om{\omega}
\def\lora{\longrightarrow}
\def\ov{\overline}
\def\g{\gamma}
\def\d{\delta}
\def\b{\beta}
\newtheorem{theorem}{Theorem}[section]
\newtheorem{lemma}[theorem]{Lemma}
\newtheorem{definition}[theorem]{Definition}
\newtheorem{remark}[theorem]{Remark}
\newtheorem{notation}[theorem]{Notation}
\newtheorem{claim}[theorem]{Claim}
\numberwithin{equation}{section}
\def\MPB{{\mathbb{P}}}
\def\MQB{{\mathbb{Q}}}
\def\MRB{{\mathbb{R}}}
\def\MCB{{\mathbb{C}}}
\def\ds{\displaystyle}
\def\k{\kappa}
\def\sse{\subseteq}
\def\ss{\subset}
\def\l{\lambda}
\def\lan{\langle}
\def\ran{\rangle}
\def\a{\alpha}
\def\om{\omega}
\def\lora{\longrightarrow}
\def\ov{\overline}
\def\g{\gamma}
\def\d{\delta}
\def\b{\beta}
\def\l{\lambda}
\def\ds{\displaystyle}
\def\sse{\subseteq}
\def\lora{\longrightarrow}
\def\lan{\langle}
\def\ran{\rangle}
\def\ov{\bar}
\def\rmark{\mbox{$\rm\bf\rule{0.06em}{1.45ex}\kern-0.05em R$}}
\def\pmark{\mbox{$\rm\bf\rule{0.06em}{1.45ex}\kern-0.05em P$}}
\def\nmark{\mbox{$\rm\bf\rule{0.06em}{1.45ex}\kern-0.05em N$}}
\def\vdash{\mbox{$\rm\| \kern-0.13em -$}}
\newcommand{\lusim}[1]{\smash{\underset{\raisebox{1.2pt}[0cm][0cm]{$\sim$}}
{{#1}}}}
\def\l{\lambda}
\def\ds{\displaystyle}
\def\sse{\subseteq}
\def\lora{\longrightarrow}
\def\lan{\langle}
\def\ran{\rangle}
\def\ov{\bar}
\def\rmark{\mbox{$\rm\bf\rule{0.06em}{1.45ex}\kern-0.05em R$}}
\def\pmark{\mbox{$\rm\bf\rule{0.06em}{1.45ex}\kern-0.05em P$}}
\def\nmark{\mbox{$\rm\bf\rule{0.06em}{1.45ex}\kern-0.05em N$}}
\def\vdash{\mbox{$\rm\| \kern-0.13em -$}}
\begin{document}

\title[Adding a lot of Cohen reals by adding a few I]{Adding a lot of Cohen reals by adding a few I }

\author[M. Gitik and M. Golshani]{Moti Gitik and Mohammad Golshani}

\thanks{} \maketitle




\begin{abstract}
In this paper we produce models $V_1\sse V_2$ of set theory such
that adding $\k$-many Cohen reals to $V_2$ adds $\l$-many Cohen
reals to $V_1$, for some $\l>\k$. We deal mainly with the case
when $V_1$ and $V_2$ have the same cardinals.
\end{abstract}

\maketitle


\section{Introduction}

A basic fact about Cohen reals is that adding $\l$-many Cohen
reals cannot produce more than $\l$-many of Cohen reals \footnote{By ``$\lambda$-many Cohen reals'' we mean ``a generic object $\langle s_{\alpha}: \alpha<\lambda   \rangle$ for the poset $\MCB(\lambda)$ of finite partial functions from $\lambda\times\omega$ to $2$''.}. More
precisely, if $\lan s_\a:\a<\l\ran$ are $\l$-many Cohen reals over
$V$, then in $V[\lan s_\a:\a<\l\ran]$ there are no $\l^+$-many
Cohen reals over $V$. But if instead of dealing with one universe
$V$ we consider two, then the above may no longer be true.

The purpose of this paper is to produce models $V_1\sse V_2$ such
that adding $\k$-many Cohen reals to $V_2$ adds $\l$-many Cohen
reals to $V_1$, for some $\l>\k$. We deal mainly with the case
when $V_1$ and $V_2$ have the same cardinals.

\section{Models with the same reals}

In this section we produce models $V_1\sse V_2$ as above with the
same reals. We first state a general result.

\begin{theorem}
Let $V_1$
be an extension of $V$. Suppose that in $V_1:$

$(a)$ $\kappa < \lambda$ are infinite cardinals,

$(b)$ $\lambda$ is regular,

$(c)$ there exists an increasing sequence $\langle \kappa_n: n < \omega  \rangle$ of regular cardinals cofinal in $\kappa$; in particular $cf(\kappa) = \omega,$

$(d)$ there exists an increasing (mod finite) sequence $\langle
f_{\alpha}: \alpha < \lambda \rangle$ of functions in
$\ds\prod_{n<\omega}(\kappa_{n+1}\setminus\kappa_n),$\footnote{Note that condition $(d)$ holds automotically for $\lambda=\kappa^+$; given any collection $\mathcal{F}$ of $\kappa$-many elements of $\ds\prod_{n<\omega}(\kappa_{n+1}\setminus\kappa_n)$ there exists $f$ such that for each $g \in \mathcal{F}, f(n)> g(n)$ for all large $n$. Thus we can define, by induction on $\alpha< \kappa^+$, an increasing (mod finite) sequence $\langle
f_{\alpha}: \alpha < \kappa^+ \rangle$ in $\ds\prod_{n<\omega}(\kappa_{n+1}\setminus\kappa_n)$.}

and

$(e)$ there exists a club $C \subseteq \lambda$
which avoids points of countable $V$-cofinality.

Then adding $\kappa$-many Cohen reals over $V_1$ produces $\lambda$-many Cohen reals over $V.$
\end{theorem}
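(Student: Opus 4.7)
The plan is to work in $V_1[s]$, where $s\colon\kappa\times\omega\to 2$ is $\MCB(\kappa)^{V_1}$-generic over $V_1$, and to define reals $r_\alpha\in 2^\omega$ by $r_\alpha(n):=s(f_\alpha(n),n)$ for each $\alpha<\lambda$; this is well-defined because $f_\alpha(n)\in[\kappa_n,\kappa_{n+1})\subseteq\kappa$. Viewing $R=\langle r_\alpha:\alpha<\lambda\rangle$ as an element of $2^{\lambda\times\omega}$, the aim is to show that $R$ is $\MCB(\lambda)^V$-generic over $V$. Genericity reduces to the following density claim: for every $V$-dense open $D\subseteq\MCB(\lambda)^V$ and every $p\in\MCB(\kappa)^{V_1}$, there exist $p^*\le p$ and $q\in D$ with $p^*\Vdash q\subseteq R$. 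Unpacking the forcing relation, $p^*$ must extend $p$ by setting $p^*(f_\alpha(n),n)=q(\alpha,n)$ on $\dom(q)$, and this defines a legitimate Cohen condition iff two compatibility constraints hold on $\dom(q)$: (i) whenever distinct $(\alpha,n),(\beta,n)\in\dom(q)$ satisfy $f_\alpha(n)=f_\beta(n)$, the values $q(\alpha,n)$ and $q(\beta,n)$ coincide; and (ii) whenever $(f_\alpha(n),n)\in\dom(p)$ for some $(\alpha,n)\in\dom(q)$, one has $q(\alpha,n)=p(f_\alpha(n),n)$.

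I would first reduce, by extending $p$, to the case $\dom(p)\subseteq\kappa_{k_0}\times k_0$ for some $k_0<\omega$; then for any $n\ge k_0$ and any $\alpha<\lambda$, $f_\alpha(n)\ge\kappa_n\ge\kappa_{k_0}$, so $(f_\alpha(n),n)\notin\dom(p)$ and (ii) becomes vacuous on coordinates $n\ge k_0$. Hypothesis (d) supplies, for each finite $F\subseteq\lambda$, a threshold $N_F<\omega$ such that $\alpha\mapsto f_\alpha(n)$ is strictly increasing on $F$ for all $n\ge N_F$, which makes (i) vacuous for any $q$ with $\dom(q)\subseteq F\times[N_F,\omega)$. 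The problem thus reduces to producing a finite $F\subseteq C$ together with $q\in D$ whose support lies inside the slab $F\times[\max(k_0,N_F),\omega)$; one then defines $p^*$ as dictated by $q$, and the pair $(p^*,q)$ works.

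The main obstacle is this final support-control step: a direct density application to a prescribed $q_0$ on the slab yields $q\le q_0$ in $D$ whose support may spread outside $F$ or drop below $\max(k_0,N_F)$. Here hypothesis (e) enters essentially. Because $C$ avoids $V$-countably-cofinal points, every limit $\alpha\in C$ satisfies $\cf^V(\alpha)>\omega$, so any $V$-defined assignment recording the ``extra support'' required to land in $D$ cannot approach $\alpha$ through a countable sequence in $V$. A Fodor-style pressing-down argument in $V$, carried out along $C$, then produces (stationarily many) $\alpha\in C$ at which the extra support is reflected below $\alpha$, allowing one to extract a finite $F\subseteq C$ and $q\in D$ supported inside the desired slab. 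Carrying out this alignment between the $V$-combinatorics of $D$ and the $V_1$-structure of $\langle f_\alpha\rangle$ via the club $C$ is the technical heart of the proof.
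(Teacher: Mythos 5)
Your construction reads each bit of $r_\alpha$ from a fixed coordinate, $r_\alpha(n)=s(f_\alpha(n),n)$, and this rigidity is where the argument breaks. Since $\lambda>\kappa_{n+1}$, the map $\alpha\mapsto f_\alpha(n)$ can never be injective, so for every column $n$ there are $\lambda$-many ordinals whose $n$-th bits are deterministically identified; hypothesis $(d)$ only de-collides a \emph{finite} set of indices from some threshold onward, and gives no control at small $n$. Your proposed escape --- reduce to finding $q\in D$ supported in a slab $F\times[\max(k_0,N_F),\omega)$ with $F\subseteq C$ --- is not a valid reduction: a dense open $D$ can force membership of a prescribed coordinate in every one of its elements (e.g.\ $D=\{q:(\alpha_0,0)\in\dom(q)\}$ for a fixed $\alpha_0\notin C$), so no element of $D$ lies in any such slab and no pressing-down argument can produce one. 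Hypothesis $(e)$ is also not usable the way you suggest; its actual role is much more modest: for a countable $I\in V$, the supremum of the points of $C$ below members of $I$ would have countable $V$-cofinality if $I$ met infinitely many intervals of $C$, so $I$ meets only finitely many such intervals.

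What is missing is a degree of freedom that lets the \emph{forcing} move the reading frame after the target condition $b\in D$ has been chosen. The paper reserves a second block of Cohen reals $\langle r'_\imath\rangle$ and, for $\alpha$ not in $C$ (indexed as the $\imath$-th point of its interval), defines $s_\alpha(n)=r_{f_\alpha(k(\imath)+n)}(0)$ where $k(\imath)=\min\{k:r'_\imath(k)=1\}$ is generic. Given $b\in D$, one then forces $k(\imath)=m^*$ for an $m^*$ so large that the coordinates $f_\alpha(m^*+n)$ used by the finitely many relevant $\alpha$'s are pairwise distinct and avoid everything already committed; this is exactly what eliminates the collisions your scheme cannot avoid. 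The finitely many points of $C$ that $I$ meets are handled differently: one first decides finitely many of their bits and only then chooses $b\in D$ \emph{compatible with} those decided values (rather than trying to prescribe the support of $b$). Finally, for $\lambda>\kappa^+$ an interval of $C$ may have more than $\kappa$ points, so a further recursive tree decomposition is needed; your outline does not address this case at all.
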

\begin{proof}
We consider two cases.

{\bf Case} {\bf $\lambda=\kappa^{+}.$}
Force to add
$\k$-many Cohen reals over $V_1$. Split them into two sequences of
length $\k$ denoted by $\lan r_{\imath} : \imath<\k\ran$ and $\lan r'_{\imath} :
\imath<\k\ran$. Also let $\lan f_\a : \a<\k^+\ran\in V_1$ be an
increasing (mod finite) sequence in $\ds\prod_{n<\om}(\k_{n+1}
\setminus \k_{n})$. Let $\a<\k^+$. We define a real $s_\a$ as
follows:

~~~~~~~~~~~~~~~~~~~~~~~~~~~~~~~~~~~~~~~~~~~~~

{\bf Case 1.} $\a\in C$. Then

\begin{center}
$\forall n<\om$, $s_\a(n)=r_{f_\a(n)}(0)$.
\end{center}

{\bf Case 2.} $\a\not\in C$. Let $\a^*$ and $\a^{**}$ be two
successive points of $C$ so that $\a^*<\a<\a^{**}$. Let $\lan\a_{\imath} :
\imath<\k\ran$ be some fixed enumeration of the interval
$(\a^*,\a^{**})$. Then for some $\imath<\k$,  $\a=\a_{\imath}$.
Let $k(\imath)=\min\{k<\om : r'_{\imath}(k)=1\}$. Set

\begin{center}
 $\forall n<\om$, $s_\a(n)=r_{f_\a(k(\imath)+n)}(0)$.
\end{center}

The following lemma completes the proof.

\begin{lemma} $\lan s_\a:\a<\k^+\ran$ is a sequence of $\k^+$-many Cohen
reals over $V$.
\end{lemma}
\begin{notation} For each set $I$, let ${\MCB}(I)$ be the Cohen forcing
notion for adding $I$-many Cohen reals. Thus ${\MCB}(I)=\{p:p$ is
a finite partial function from $I\times \om$ into 2 $\}$, ordered
by reverse inclusion.
\end{notation}
\begin{proof} First note that $\lan\lan r_{\imath} : \imath<\k\ran, \lan r'_{\imath} :
\imath<\k\ran\ran$ is ${\MCB}(\k)\times {\MCB}(\k)$-generic over $V_1$.
By c.c.c of ${\MCB}(\k^+)$ it suffices to show that for any
countable set $I\sse\k^+$, $I\in V$, the sequence $\lan s_\a:\a\in
I\ran$ is ${\MCB}(I)$-generic over $V$. Thus it suffices to prove
the following:

~~~~~~~~~~~~~~~~~~~~~~~~~~~~~~~~~~~~~~~~~~~~~~~~~~~~~~~~~~~~~~~

$\hspace{1.5cm}$ for every $(p,q)\in {\MCB}(\k)\times {\MCB}(\k)$
and every open dense subset $D\in$

$(*)$ $\hspace{1cm}$ $V$ of ${\MCB}(I)$,~ there is
$(\ov{p},\ov{q})\leq(p,q)$ such~ that $(\ov{p},\ov{q}) \vdash
``  \lan \lusim{s}_\a : \a\in I\ran$

$\hspace{1.6cm}$  extends some element of $D$''.

~~~~~~~~~~~~~~~~~~~~~~~~~~~~~~~~~~~~~~~~~~~~~~~~~~~~~~~~~~~~~~

Let $(p,q)$ and $D$ be as above. For simplicity suppose that
$p=q=\emptyset$. By $(e)$ there are only finitely many $\a^*\in
C$ such that $I\cap[\a^*,\a^{**}) \neq\emptyset$, where
$\a^{**}=\min(C\setminus(\a^*+1))$. For simplicity suppose that
there are two $\a^*_1<\a^*_2$ in $C$ with this property. Let
$n^*<\om$ be such that for all $n\geq n^*$, $f_{\a^*_1}(n)<
f_{\a^*_2}(n)$. Let $p\in {\MCB}(\k)$ be such that
\begin{center}
$\dom(p)=\{\lan \b,0\ran : \exists n<n^*(\b= f_{\a^*_1}(n)$ or $\b=
f_{\a^*_2}(n))\}$.
\end{center}
Then for $n<n^*$ and $j\in\{1,2\}$,
\begin{center}
$(p,\emptyset)\vdash  \lusim{s}_{\a^*_j}(n)=\lusim{r}_{
f_{\a^*_j}(n)} (0)=p(f_{\a^*_j(n)},  0)$
\end{center}
Thus $(p,\emptyset)$ decides $s_{\a^*_1} \upharpoonright n^*$ and $s_{\a^*_2}
\upharpoonright  n^*$. Let $b\in D$ be such that
$  \lan b(\a^*_1),b(\a^*_2)\ran$
extends  $\lan s_{\a^*_1} \upharpoonright n^*,  s_{\a^*_2} \upharpoonright
n^*\ran$,
where $b(\a)$ is defined by $b(\a):\{n:(\a,n)\in \dom(b)\}\lora 2$
and $b(\a)(n)=b(\a,n)$. Let
\begin{center}
$p'=p\cup \ds\bigcup_{j\in\{1,2\}} \{\lan f_{\a^*_j}(n), 0,
b(\a^*_j,n)\ran : n\geq n^*, ( \a^*_j,n ) \in \dom(b)\}$.
\end{center}
Then $p'\in {\MCB}(\k)$ \footnote{This is because for $n\geq n^*, f_{\a^*_1}(n)\neq f_{\a^*_2}(n) $ and for $j\in \{1,2 \}, f_{\a^*_j}(n)\notin \{f_{\a^*_j}(m):m<n   \}, $ thus there are no collisions.} and
\begin{center}
$(p',\emptyset)\vdash `` \langle
\lusim{s}_{\a^*_1},\lusim{s}_{\a^*_2}\ran$ \ \ {extends} \ \ \
$\lan b(\a^*_1),b(\a^*_2)\ran$''.
\end{center}
For $j\in\{1,2\}$, let $\{\a_{j_0},...,\a_{jk_j-1}\}$ be an
increasing enumeration of components of $b$ in the interval
$(\a^*_j,\a^{**}_j)$ (i.e. those $\a\in(\a^*_j,\a^{**}_j)$ such
that $( \a,n ) \in \dom(b)$ for some $n$). For $j \in \{1,2\}$ and
$l < k_j$ let $\a_{jl}=\a_{\imath_{jl}}$ where $\imath_{jl}<\k$ is
the index of $\a_{jl}$ in the enumeration of the interval
$(\a^*_j,\a^{**}_j)$ considered in Case 2 above.  Let $m^*<\om$ be
such that for all $n\geq m^*$, $j\in\{1,2\}$ and $l_j<l'_j<k_j$ we
have
\begin{center}
$f_{\a^*_1}(n) < f_{\a_{1\ell_1}}(n) < f_{\a_{1\ell'_1}}(n) <
f_{\a^*_2}(n) < f_{\a_{2\ell_2}}(n) <  f_{\a_{2\ell'_2}}(n).$
\end{center}
Let
\begin{center}
$\ov{q}=\{\lan \imath_{jl},n,0\ran : j\in\{1,2\}, l<k_j, n<m^*\} \cup \{\lan \imath_{jl},m^*,1\ran : j\in\{1,2\}, l<k_j\}$.
\end{center}
Then $\ov{q}\in {\MCB}(\k)$ and for $j\in \{1,2\}$ and $n<m^*$,
$(\emptyset,\ov{q})\vdash$``$  r'_{\imath_{jl}}(n)=0$ and $  r'_{\imath_{jl}}(m^*)=1
$'', thus $(\emptyset,\ov{q})\vdash  k(j,l)=\min\{k<\om: r'_{\imath_{jl}}(k)=1\}=
m^*$. Let
\begin{center}
$\ov{p}=p'\cup \ds\bigcup_{j\in\{1,2\}} \{\lan f_{\a_{jl}}
(m^{*}+n), 0,b (\a_{jl},n)\ran : l<k_j, (\a_{jl},n) \in \dom(b)\}.$
\end{center}
It is easily seen that $\ov{p}\in {\MCB}(\k)$ is well-defined and
for $j\in\{1,2\}$ and $l<k_j$,
\begin{center}
 $(\ov{p},\ov{q}) \vdash ``
\lusim{s}_{\a_{jl}}$ extends $b(\a_{jl})$''.
\end{center}
Thus
\begin{center}
$(\ov{p},\ov{q})\vdash ``  \langle \lusim{s}_\a: \a\in
I\ran$ extends $b$''.
\end{center}
$(*)$ follows and we are done.
\end{proof}

{\bf Case} {\bf $\lambda > \kappa^{+}.$} Force to add
$\k$-many Cohen reals over $V_1$.
We now construct $\l$-many Cohen reals over $V$ as in the above case
using $C$ and $\lan f_\a : \a<\l\ran$. Case 2 of the definition
of $\lan s_\a : \a <\l\ran$ is now problematic since the
cardinality of an interval $(\a^*,\a^{**})$ (using the above notation) may now be above $\k$ and we have only $\k$-many Cohen reals
to play with. Let us proceed as follows in order to overcome this.

Let us rearrange the Cohen reals as $\lan r_{n,\a}:n<\om,
\a<\k\ran$ and $\lan r_\eta:\eta\in[\k]^{<\om}\ran$. We define by
induction on levels a tree $T\sse[\l]^{<\om}$, its projection
$\pi(T)\sse[\k]^{<\om}$ and for each $n<\om$ and $\a\in L ev_n(T)$
a real $s_\a$. The union of the levels of $T$ will be $\l$ so
$\lan s_\a:\a<\l\ran$ will be defined.

For $n=0$, let $Lev_0(T)=\lan\ran=Lev_0(\pi(T))$.

For $n=1$, let $Lev_1(T)=C, Lev_1(\pi(T))=\{0\}$, i.e.
$\pi(\lan\a\ran)=\lan 0\ran$ for every $\a\in C$. For $\a\in
C$ we define a real $s_\a$ by
\begin{center}
$\forall m<\om$, $s_\a(m)=r_{1,f_\a(m)}(0)$.
\end{center}

Suppose now that $n>1$ and $T\upharpoonright n$ and $\pi(T)\upharpoonright n$ are
defined. We define $Lev_n(T)$, $L ev_n(\pi(T))$ and reals $s_\a$
for $\a\in Lev_n(T)$. Let $\eta\in T\upharpoonright (n-1)$, $\a^*,\a^{**}\in
Suc_T(\eta)$ and $\a^{**}=\min(Suc_T(\eta)\setminus(\a^*+1))$. We then
define $Suc_T(\eta^{\frown}\lan\a^{**}\ran)$ if it is not yet
defined \footnote{Then $Lev_{n}(T)$ will be the union of such $Suc_T(\eta^{\frown}\lan\a^{**}\ran)$'s.}.

~~~~~~~~~~~~~~~~~~~~~~~~~~~~~~~~~~~~~~~~~~~~~~~

{\bf Case A.} $|\a^{**}\setminus \a^*|\leq\k$.

Fix some enumeration $\lan\a_{\imath}:\imath<\rho \leq\k\ran$ of
$\a^{**}\setminus\a^*$. Let

\begin{itemize}
\item $Suc_T(\eta^{\frown}\lan\a^{**}\ran)= \a^{**} \setminus\a^*$,
\item $Suc_T(\eta^{\frown}\lan\a^{**}\ran^{\frown}\lan\a\ran)=\lan\ran$
for $\a\in\a^{**}\setminus\a^*$, \item
$Suc_{\pi(T)}(\pi(\eta^{\frown}\lan\a^{**}\ran))=\rho=|\a^{**}
\setminus\a^*|$, \item
$Suc_{\pi(T)}(\pi(\eta^{\frown}\lan\a^{**}\ran)^{\frown}\lan
\imath\ran)=\lan\ran$ for $\imath<\rho$.
\end{itemize}

Now we define $s_\a$ for $\a\in\a^{**}\setminus\a^*$. Let
$\imath$ be such that $\a=\a_{\imath}$. let
$k=\min\{m<\om:r_{\pi(\eta^{\frown}\lan\a^{**}\ran)^{\frown}\lan\imath\ran}(m)=1\}$,
Finally let

\begin{center}
 $\forall m<\om, s_\a(m)=r_{n,f_\a(k+m)}(0)$.
\end{center}

{\bf Case B.} $|\a^{**} \setminus\a^*|>\k$ and $cf(\a^{**})<\k.$

Let $\rho=cf\a^{**}$ and let $\lan \a^{**}_\nu:\nu<\rho\ran$ be a
normal sequence cofinal in $\a^{**}$ with $\a^{**}_0>\a^*$. Let

\begin{itemize}
\item $Suc_T(\eta^{\frown}\lan\a^{**}\ran)=\{\a^{**}_\nu:\nu<\rho\}$,
\item $Suc_{\pi(T)}(\pi (\eta^{\frown}\lan\a^{**}\ran))=\rho$.
\end{itemize}

 Now we define $s_{\a^{**}_\nu}$ for $\nu<\rho$. Let $k=\min\{m<\om: r_{\pi (\eta^{\frown}\lan\a^{**}\ran)^{\frown}\lan\nu\ran} (m)=1 \}$
 and let
\begin{center}
 $\forall m<\om,s_{\a^{**}_\nu}(m)=r_{n,f_{\a^{**}_\nu}(k+m)}(0)$.
\end{center}

{\bf  Case C.} $cf(\a^{**})>\k$.

Let $\rho$ and $\lan \a^{**}_\nu:\nu<\rho \ran$ be as in Case B.
Let

\begin{itemize}
\item $Suc_T(\eta^{\frown}\lan\a^{**}\ran)= \{\a^{**}_\nu:\nu<\rho\}$, \item $Suc_{\pi(T)}(\pi
(\eta^{\frown}\lan\a^{**}\ran))=\lan 0\ran $.
\end{itemize}

 We define
$s_{\a^{**}_\nu}$ for $\nu<\rho$. Let
$k=\min\{m<\om:r_{\pi(\eta^{\frown}\lan\a^{**}\ran)^{\frown}\lan 0\ran}
(m)=1\}$ and let
\begin{center}
 $\forall m<\om,s_{\a^{**}_\nu} (m)=r_{n,f_{\a^{**}_\nu}(k+m)}(0)$.
\end{center}
By the definition, $T$ is a well-founded tree and
$\ds\bigcup_{n<\om} Lev_n(T)=\l$. The following lemma completes
our proof.

\begin{lemma} $\lan s_\a : \a<\l\ran$ is a sequence of $\l$-many Cohen
reals over $V$.
\end{lemma}
\begin{proof} First note that $\lan\lan r_{n,\a} : n<\om,
\a<\k\ran,\lan r_\eta:\eta\in[\k]^{<\om}\ran\ran$ is
${\MCB}(\om\times\k)\times {\MCB}([\k]^{<\om})$-generic over
$V_1$. By c.c.c of ${\MCB}(\l)$ it suffices to show that for any
countable set $I\sse\l$, $I\in V$, the sequence $\lan s_\a:\a\in
I\ran$ is ${\MCB}(I)$-generic over $V$. Thus it suffices to prove
the following:

~~~~~~~~~~~~~~~~~~~~~~~~~~~~~~~~~~~~~~~~~~~~~~~~~~~~~~~~~~~

$\hspace{1.4cm}$ For every $(p,q)\in {\MCB}(\om\times \k)\times
{\MCB}([\k]^{<\om})$ and every open dense subset

$(*)$$\hspace{1cm}$ $D\in V$ of ${\MCB}(I)$, there is
$(\ov{p},\ov{q})\leq(p,q)$ such that $(\ov{p},\ov{q})\vdash
``  \lan \lusim{s}_\a : \a\in I\ran$

$\hspace{1.5cm}$ extends some element of $D$''.

~~~~~~~~~~~~~~~~~~~~~~~~~~~~~~~~~~~~~~~~~~~~~~~~~~~~~~~~~

Let $(p,q)$ and $D$ be as above. for simplicity suppose that
$p=q=\emptyset$. For each $n<\om$ let $I_n=I\cap Lev_n(T)$. Then
$I_0=\emptyset$ and $I_1=I\cap C$ is finite. For simplicity let
$I_1=\{\a^*_1,\a^*_2\}$ where $\a^*_1<\a^*_2$. Pick $n^*<\om$ such
that for all $n\geq n^*$, $f_{\a^*_1}(n)<f_{\a^*_2}(n)$. Let
 $p_0\in {\MCB}(\om\times\k)$ be such that
\begin{center}
$\dom(p_0)=\{\lan 1,\b,0\ran:\exists n<n^*(\b=f_{\a^*_1}(n)$ or
$\b=f_{\a^*_2}(n))\}$.
\end{center}
 Then for $n<n^*$ and $j\in\{1,2\}$
\begin{center}
$(p_0,\emptyset)\vdash
\lusim{s}_{\a^*_j}(n)=\lusim{r}_{1,f_{\a^*_j}(n)}(0)=p_0(1,f_{\a^*_j}(n),0)
$.
\end{center}
thus $(p_0,\emptyset)$ decides $s_{\a^*_1}\upharpoonright n^*$ and
$s_{\a^*_2}\upharpoonright n^*$. Let $b\in D$ be such that
$  \lan b(\a^*_1), b(\a^*_2)
\rangle$ extends $\lan s_{\a^*_1}\upharpoonright n^*, {s}_{\a^*_2}\upharpoonright
n^*\ran$.
Let
\begin{center}
$p_1=p_0\cup \ds\bigcup_{j\in\{1,2\}} \{ \langle
1,f_{\a^*_j}(n),0,b(\a^*_j,n)\ran:n\geq n^*,(\a^*_j,n)\in \dom(b)\}$.
\end{center}
 Then $p_1\in {\MCB}(\om\times\k)$ is well-defined and letting $q_1=\emptyset$, we have
\begin{center}
$(p_1,q_1)\vdash ``  \lan
\lusim{s}_{\a^*_1},\lusim{s}_{\a^*_2}\ran$ extends $\lan
b(\a^*_1), b(\a^*_2)\ran$''.
\end{center}

For each $n<\om$ let $J_n$ be the set of all components of $b$
which are in $I_n$, i.e. $J_n=\{\a\in I_n:\exists n, (\a,n)\in \dom(b)\}$. We note that $J_0=\emptyset$ and
$J_1=I_1=\{\a^*_1,\a^*_2\}$. Also note that for all but finitely
many $n<\om, J_n=\emptyset$. Thus let us suppose $t<\om$ is such
that for all $n>t$, $J_n=\emptyset$. Let us consider $J_2$. For
each $\a\in J_2$ there are three cases to be considered \footnote{Note that all the action in Cases 1-3 below is happening in the generic extension; in particular we did not yet determine the value of $k_{\alpha}$.}:

~~~~~~~~~~~~~~~~~~~~~~~~~~~~~~~

{\bf Case 1.} There are $\a^*<\a^{**}$ in $Lev_1(T)=C$,
$\a^{**}=\min(C\setminus(\a^*+1))$ such that $|\a^{**}
\setminus\a^*|\leq\k$ and
 $\a\in Suc_T(\lan\a^{**}\ran)=\a^{**}\setminus\a^*$.  Let $\imath_\a$ be the index of $\a$ in the
 enumeration of $\a^{**}\setminus\a^*$ considered in Case $A$ above, and
 let $k_\a=\min\{m<\om:r_{\pi(\lan\a^{**}\ran)^{\frown}\lan \imath_\a\ran}(m)=1\}$. Then
\begin{center}
$\forall m<\om$, $s_\a(m)=r_{2,f_\a(k_\a+m)}(0)$.
\end{center}

{\bf Case 2.} There are $\a^*<\a^{**}$  as above such that
$|\a^{**}\setminus\a^*|>\k$ and $\rho=cf\a^{**}<\k$. Let
$\lan\a^{**}_\nu:\nu<\rho\ran$ be as in Case B. Then
$\a=\a^{**}_{\nu_\a}$ for some $\nu_\a<\rho$ and if
$k_\a=\min\{m<\om:r_{\pi(\lan\a^{**}\ran)^{\frown}\lan
\nu_\a\ran}(m)=1\}$. Then
\begin{center}
$\forall m<\om$, $s_\a(m)=r_{2,f_\a(k_\a+m)}(0)$.
\end{center}

{\bf Case 3.} There are $\a^*<\a^{**}$  as above such that
$\rho=cf\a^{**}>\k$. Let  $\lan\a^{**}_\nu:\nu<\rho\ran$ be as in
Case C. Then $\a=\a^{**}_{\nu_\a}$ for some $\nu_\a<\rho$ and
if  $k_\a=\min\{m<\om:r_{\pi(\lan\a^{**}\ran)^{\frown}\lan
0\ran}(m)=1\}$, then
\begin{center}
$\forall m<\om$, $s_\a(m)=r_{2,f_\a(k_\a+m)}(0)$.
\end{center}

Let $m^*<\om$ be such that for all $n\geq m^*$ and $\a<\a'$ in
$J_1\cup J_2$, $f_\a(n)<f_{\a'}(n)$. Let
\begin{center}
 $q_2=\{\lan \eta,n,0\ran :
n<m^*, \exists \a\in J_2(\eta=\pi(\lan\a^{**}\ran)^{\frown}\lan
i_\a\ran$ or

$\hspace{4.95cm}$  $\eta=\pi(\lan\a^{**}\ran)^{\frown}
\lan\nu_\a\ran$ or

$\hspace{5.5cm}$  $\eta=\pi(\lan\a^{**}\ran)^{\frown}\lan 0\ran )
\}$

$\hspace{1.6cm}$$\cup$ $\{\lan \eta, m^*,1\ran :
 \exists \a\in J_2(\eta=\pi(\lan\a^{**}\ran)^{\frown}\lan
i_\a\ran$ or

$\hspace{4.95cm}$  $\eta=\pi(\lan\a^{**}\ran)^{\frown}
\lan\nu_\a\ran$ or

$\hspace{5.5cm}$  $\eta=\pi(\lan\a^{**}\ran)^{\frown}\lan 0\ran )
\}$

\end{center}
 Then
$q_2\in {\MCB}([\k]^{<\om})$ is well-defined and for each $\a\in
J_2$, $(\phi,q_2)\vdash   k_\a = m^* $. Let
\begin{center}
$p_2=p_1\cup\{\lan 2, f_\a(m^{*}+m),0,b(\a,m)\ran:\a\in J_2,
(\a,m)\in \dom(b)\}$.
\end{center}
 Then $p_2\in {\MCB}(\om\times \k)$ is
well-defined, $(p_2,q_2)\leq(p_1,q_1)$ and for $\a\in J_2$ and
$m<\om$ with $(\a,m)\in \dom(b)$,
\begin{center}
$(p_2,q_2)\vdash
\lusim{s}_\a(m)=\lusim{r}_{2,f_\a(k_\a+m)}(0)=p_2(2,f_\a(k_\a+m),0)=b(\a,m)=b(\a)(m)
$,
\end{center}
thus $(p_2,q_2)\vdash `` \lusim{s}_\a$ extend $b(\a)
$'' and hence
\begin{center}
$(p_2,q_2)\vdash ``  \lan\lusim{s}_\a: \a\in J_1\cup
J_2\ran $ extends $\lan b(\a):\a\in J_1\cup J_2\ran $''.
\end{center}

By induction suppose that we have defined $(p_1,q_1)\geq
(p_2,q_2)\geq ... \geq (p_j,q_j)$ for $j<t$, where for $1\leq
i\leq j$,
\begin{center}
$(p_i,q_i)\vdash `` \lan\lusim{s}_\a: \a\in J_1\cup
...\cup J_i\ran $ extends $\lan b(\a):\a\in J_1\cup ...\cup
J_i\ran $''.
\end{center}
We define $(p_{j+1},q_{j+1})\leq (p_{j},q_{j})$ such that for each
$\a\in J_{j+1}, (p_{j+1},q_{j+1}) \vdash ``  \lusim{s}_\a$
extends $b(\a)$''.

 Let $\a\in J_{j+1}$. Then we can find
$\eta\in T\upharpoonright j$ and $\a^*<\a^{**}$ such that $\a^*,\a^{**}\in
Suc_T(\eta)$, $\a^{**}=\min(Suc_T(\eta)\setminus(\a^*+1))$ and
$\a\in Suc_T(\eta^{\frown}\lan\a^{**}\ran)$. As before there are
three cases to be considered \footnote{Again note that all the action in Cases 1-3 below is happening in the generic extension.}:

~~~~~~~~~~~~~~~~~~~~~~~~~~~~~~~~~~~

{\bf Case 1.} $|\a^{**}\setminus\a^*|\leq\k$. Then let $i_\a$ be
the index of $\a$ in the enumeration of $\a^{**}\setminus\a^*$
considered in Case A and let $k_\a=\min\{m<\om:
r_{\pi(\eta^{\frown}\lan \a^{**}\ran)^{\frown} \lan i_\a\ran}(m)=1\}$.
Then
\begin{center}
$\forall m<\om, s_\a(m)=r_{j+1,f_\a(k_\a+m)}(0)$.
\end{center}

{\bf Case 2.} $|\a^{**}\setminus\a^*|>\k$ and
$\rho=cf\a^{**}<\k$. Let $\lan\a^{**}_\nu:\nu<\rho\ran$ be as in
 Case B and let ${\nu_\a}<\rho$ be such that $\a=\a^{**}_{\nu_\a}$.
 Let  $k_\a=\min\{m<\om: r_{\pi(\eta^{\frown}\lan \a^{**}\ran)^{\frown} \lan \nu_\a\ran}(m)=1\}$. Then
\begin{center}
 $\forall m<\om, s_\a(m)=r_{j+1,f_\a(k_\a+m)}(0)$.
\end{center}

{\bf Case 3.} $\rho=cf\a^{**}>\k$. Let
$\lan\a^{**}_\nu:\nu<\rho\ran$ be as in Case C. Let ${\nu_\a}<\rho$ be such that $\a=\a^{**}_{\nu_\a}$ and let
$k_\a=\min\{m<\om: r_{\pi(\eta^{\frown}\lan \a^{**}\ran)^{\frown} \lan
0\ran}(m)=1\}$. Then
\begin{center}
$\forall m<\om, s_\a(m)=r_{j+1,f_\a(k_\a+m)}(0)$.
\end{center}
Let $m^*<\om$ be such that for all $n\geq m^*$ and $\a<\a'$ in
$J_1\cup...\cup J_{j+1}, f_\a(n)< f_{\a'}(n)$. Let

$\hspace{1cm}$$q_{j+1}=q_j\cup\{\lan \ov{\eta},n,0\ran: n<m^*,
\exists \a\in J_{j+1}$ (for some unique $\eta\in T\upharpoonright j,$

$\hspace{4.4cm}$ $\a^{**}\in Suc_T(\eta)$, we have $\a\in
Suc_T(\eta^{\frown}\lan\a^{**}\ran)$

$\hspace{7.1cm}$ and $(\overline{\eta}=\pi(\eta^{\frown}\lan
\a^{**}\ran)^{\frown} \lan i_\a\ran$

 $\hspace{7.4cm}$ or $\overline{\eta}=\pi(\eta^{\frown}\lan
\a^{**}\ran)^{\frown} \lan \nu_\a\ran$

$\hspace{7.cm}$ or $\overline{\eta}=(\pi(\eta^{\frown}\lan
\a^{**}\ran)^{\frown} \lan 0\ran))\}$

$\hspace{3.5cm}$ $\cup \{\lan \ov{\eta},m^*,1\ran:
\exists \a\in J_{j+1}$ (for some unique $\eta\in T\upharpoonright j,$

$\hspace{4.4cm}$ $\a^{**}\in Suc_T(\eta)$, we have $\a\in
Suc_T(\eta^{\frown}\lan\a^{**}\ran)$

$\hspace{7.1cm}$ and $(\overline{\eta}=\pi(\eta^{\frown}\lan
\a^{**}\ran)^{\frown} \lan i_\a\ran$

 $\hspace{7.4cm}$ or $\overline{\eta}=\pi(\eta^{\frown}\lan
\a^{**}\ran)^{\frown} \lan \nu_\a\ran$

$\hspace{7.cm}$ or $\overline{\eta}=(\pi(\eta^{\frown}\lan
\a^{**}\ran)^{\frown} \lan 0\ran))\}.$

 It is easily seen that $q_{j+1}\in {\MCB}([\k]^{<\om})$ and
for each $\a\in J_{j+1}$, $(\phi,q_{j+1})\vdash  k_\a =
m^* $. Let

\begin{center}
 $p_{j+1}=p_j\cup\{\lan j+1,
f_\a(m^{*}+m),0,b(\a,m)\ran: \a\in J_{j+1}, (\a,m)\in \dom(b)\}$.
\end{center}

Then $p_{j+1}\in {\MCB}(\om\times \k)$ is well-defined and
$(p_{j+1},q_{j+1})\leq(p_j,q_j)$ and for $\a\in J_{j+1}$ we have

\begin{center}
$(p_{j+1},q_{j+1})\vdash  \lusim{s}_\a(m)
=\lusim{r}_{j+1,f_\a(k_\a+m)}(0)= p_{j+1}({j+1},f_\a(k_\a+m),0)=
b(\a,m)=b(\a)(m) $.
\end{center}

Thus $(p_{j+1},q_{j+1})\vdash ``  \lusim{s}_\a$ extends
$b(\a) $''. Finally let $(\ov{p},\ov{q})=(p_t,q_t)$. Then
for each component $\a$ of $b$,
\begin{center}
$(\ov{p},\ov{q}) \vdash `` \lusim{s}_\a$ extends $b(\a)
$''.
\end{center}
Hence

\begin{center}
 $(\ov{p},\ov{q})
\vdash ``  \lan \lusim{s}_\a: \a\in I\ran$ extends $b
$''.
\end{center}

$(*)$ follows and we are done.
\end{proof}
Theorem 2.1 follows.
\end{proof}

We now give several applications of the above theorem.

\begin{theorem} Suppose that $V$ satisfies $GCH$,
$\k=\ds\bigcup_{n<\om} \k_n$ and $\ds\bigcup_{n<\om} o(\k_n)=\k$
(where $o(\k_n)$ is the Mitchell order of $\k_n$). Then there
exists a cardinal preserving generic extension $V_1$ of $V$
satisfying $GCH$ and having the same reals as $V$ does, so that
adding $\k$-many Cohen reals over $V_1$ produces $\k^+$-many Cohen
reals over $V$.
\end{theorem}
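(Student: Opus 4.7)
The plan is to reduce everything to Theorem 2.1 with $\lambda = \kappa^+$. Inspecting the hypotheses: condition $(a)$ is trivial; $(b)$ is automatic; $(c)$ is given by hypothesis (and the $\kappa_n$ remain regular in any cardinal-preserving extension); $(d)$ is automatic in a GCH extension with the same $\kappa_n$'s, via the footnote to Theorem 2.1. So the whole game is to produce a cardinal-preserving, reals-preserving, GCH-preserving extension $V_1/V$ in which $(e)$ holds, namely a club $C\subseteq\kappa^+$ containing no point of $V$-cofinality $\omega$.

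The route is a Prikry-type forcing $\MPB\in V$ built from the coherent sequence of normal measures witnessing $\bigcup_{n<\om}o(\k_n)=\k$. Specifically, I would use an iteration/product of Prikry-style forcings at the $\k_n$'s, choosing at stage $n$ a measure of Mitchell order $\approx \k_{n-1}$ (more precisely, arranged so that the telescoping of Mitchell orders gives the needed fusion/genericity arguments). The aim is a $\MPB$ which:
\begin{enumerate}
\item[(i)] has the Prikry property (hence adds no reals, and no new bounded subsets of $\k$);
\item[(ii)] is $\k^+$-c.c.\ and preserves all cardinals;
\item[(iii)] preserves GCH;
\item[(iv)] from the generic $G$ one can read off a club $C\subseteq \k^+$ in $V_1=V[G]$ such that every $\a\in C$ has $V$-cofinality in $\{\k_n:n<\om\}\cup\{\k\}$, in particular $\cf^V(\a)\ne\om$.
\end{enumerate}
Once $\MPB$ is in hand, (i)--(iii) immediately give that $V_1$ and $V$ share reals, cardinals, and GCH, and then $(a)$--$(d)$ of Theorem 2.1 hold in $V_1$ (with $\l=\k^+$ and the same $\lan\k_n\ran$). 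Item (iv) is exactly $(e)$. Applying Theorem 2.1, adding $\k$-many Cohen reals over $V_1$ produces $\k^+$-many Cohen reals over $V$, which is the conclusion of Theorem 2.5.

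The heart of the proof is the construction of $\MPB$ satisfying (iv). The intuition is that the Prikry generic chooses, coherently across all $n$, a sequence of Radin/Magidor-type points in $\k_n$ whose Mitchell-order information can be used to canonically associate, to each $\a<\k^+$ with $\cf^V(\a)=\om$, a cofinal $\om$-sequence drawn from the generic --- and the ``good points'' $\a$ at which this procedure degenerates (because $\cf^V(\a)$ is already one of the $\k_n$'s, or $\cf^V(\a)$ is large) form a club in $\k^+$. This is essentially a scale/good-points analysis in the style of Gitik's treatment of SCH-type constructions; the role of the hypothesis $\bigcup_n o(\k_n)=\k$ is to provide enough measures at each $\k_n$ to run the iteration while simultaneously guaranteeing the Prikry property and the $\k^+$-c.c. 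The main obstacle, then, is the delicate bookkeeping needed to ensure (i) and (iv) together --- keeping the forcing ``short'' enough not to add reals or collapse cardinals, yet ``rich'' enough for the generic to project onto a club of $V$-uncountable-cofinality ordinals below $\k^+$. Steps (ii) and (iii) are standard once the forcing is laid out, via the usual chain-condition and distributivity arguments for Prikry-type forcings under GCH.
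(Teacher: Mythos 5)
Your reduction to Theorem 2.1 with $\l=\k^+$ is the right frame, and you correctly isolate condition $(e)$ as the only nontrivial hypothesis to arrange. But the forcing you propose to arrange it cannot exist: your desiderata (ii) and (iv) are jointly contradictory. The set $S=\{\a<\k^+:\cf^V(\a)=\om\}$ is stationary in $\k^+$ in $V$, and any $\k^+$-c.c.\ forcing preserves stationary subsets of $\k^+$ (every club of $\k^+$ in the extension contains a ground-model club). Hence no $\k^+$-c.c.\ extension of $V$ can contain a club $C\sse\k^+$ disjoint from $S$, no matter how cleverly the Prikry generic is read off; the ``good points'' of a scale can only give club-many points \emph{relative to uncountable cofinalities}, never a genuine club of $\k^+$ avoiding $S$. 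So the heart of your plan --- a single Prikry-type, $\k^+$-c.c., cardinal-preserving forcing whose generic directly projects onto the club required by $(e)$ --- would fail at exactly the step you flag as the main obstacle.

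The paper's proof instead proceeds in two stages. First it forces with a finite-support product $\MPB=\prod_n\MPB_n$ of Magidor forcings, where $\MPB_n$ changes $\cf(\k_{n+1})$ to $\k_n^+$ and lays down a club $ran(g^*_n)\sse\k_{n+1}$ of order type $\k_n^+$ consisting of $V$-measurable (in particular $V$-regular, hence not $V$-$\om$-cofinal) cardinals; this is $\k^+$-c.c., preserves cardinals and GCH, and adds no reals. This is where the hypothesis $\bigcup_n o(\k_n)=\k$ is used --- to run the Magidor forcings, not to build anything at $\k^+$. Second, over $V[G]$ it forces with $\MQB$, the poset of closed bounded subsets of $\k^+$ avoiding points of countable $V$-cofinality, ordered by end extension; this step is \emph{not} $\k^+$-c.c.\ and is what actually kills the stationarity of $S$. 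The technical core is Lemma 2.7: $\MQB$ is $<\k^+$-distributive (so no reals are added and no cardinals collapse) \emph{because} for every regular $\l<\k^+$ there is some $\tau<\k^+$ of cofinality $\l$ carrying a club that avoids $V$-countable-cofinality points --- and these local clubs are precisely the ones produced by the Magidor stage. If you want to salvage your write-up, you must add this second, non-c.c.\ club-shooting step and prove its distributivity from the clubs of former measurables; without it the argument does not go through.
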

\begin{proof} Rearranging the sequence $\lan \k_n:n<\om\ran$ we may
assume that $o(\k_{n+1})>\k_n$  for each $n<\om$. Let $0<n<\om$.
By [Mag 1], there exists a forcing notion ${\MPB}_n$ such that:

\begin{itemize}
\item Each condition in ${\MPB}_n$ is of the form $(g,G)$, where
$g$ is an increasing function from a finite subset of $\k^+_n$
into $\k_{n+1}$ and $G$ is a function from $\k^+_n \setminus \dom(g)$ into
$\mathcal{P}(\k_{n+1})$ such that for each $\alpha \in \dom(G), G(\alpha)$ belongs to a suitable normal measure \footnote{In fact if $\alpha > \max(\dom(g))$, then $G(\alpha)$ belongs to a normal measure on $\kappa_{n+1}$, and if $\alpha < \max(\dom(g))$, then $G(\alpha)$ belongs to a normal measure on $g(\beta)$ where $\beta=\min(\dom(g)\setminus \alpha)$.}. We may also assume that conditions have
no parts below or at $\k_n$, and sets of measure one are like this
as well. \item Forcing with ${\MPB}_n$ preserves cardinals and the
$GCH$, and adds no new subsets to $\k_n$. \item If $G_n$ is
${\MPB}_n$-generic over $V$, then in $V[G_n]$ there is a normal
function $g^*_n:\k^+_n\lora \k_{n+1}$ such that $ran(g^*_n)$ is a
club subset of $\k_{n+1}$ consisting of measurable cardinals of
$V$ such that $V[G_n]=V[g^*_n]$.
\end{itemize}

Let ${\MPB}^*=\ds\prod_{n<\om}{\MPB}_n$, and let
\begin{center}
${\MPB}=\{\lan\lan g_n,G_n\ran:
n<\om\ran\in{\MPB}^*:g_n=\emptyset$,  for all but finitely many
$n\}$.
\end{center}

Then ${\MPB}$ satisfies the $\kappa^{+}-c.c.$\footnote{This is because any two conditions $\lan\lan g_n,G_n\ran:
n<\om\ran$ and $\lan\lan g_n,H_n\ran:
n<\om\ran$ in ${\MPB}$  are compatible.} and using simple modification of arguments from [Mag 1,2] we can
show that forcing with ${\MPB}$ preserves cardinals and the $GCH$.
 Let $G$ be ${\MPB}$-generic over $V,$ and let $g^*_n:\k^+_n\lora \k_{n+1}$ be the generic
 function added by the part of the forcing corresponding to ${\MPB}_n$, for $0<n<\om$.
 Let $X=\ds\bigcup_{0<n<\om}((ran(g^*_n)\setminus \k^+_n)\cup\{\k_{n+1}\})$ and let $g^*:\k\lora\k$ be an
 enumeration of $X$ in increasing order. Then $X=ran(g^*)$ is club in $\k$ and consists entirely of
 measurable cardinals of $V$. Also $V[G]=V[g^*]$.

Working in $V[G]$, let ${\MQB}$ be the usual forcing notion for
adding a club subset of $\k^+$ which avoids points of countable
$V$-cofinality. Thus ${\MQB}=\{p:p$ is a closed bounded subset of
$\k^+$ and avoids points of countable $V$-cofinality$\}$, ordered
by end extension. Let $H$ be
 ${\MQB}$-generic over $V[G]$ and $C=\bigcup\{p:p\in H\}$.

\begin{lemma} $(a)$ (${\MQB},\leq$) satisfies the $\k^{++}$-c.c.,

$(b)$  (${\MQB},\leq$) is $<\k^+$-distributive,

$(c)$ $C$ is a club subset of $\k^+$ which avoids points of
countable $V$-cofinality.
\end{lemma}
 $(a)$ and $(c)$ of the above lemma are trivial. For use later we prove a more general version of $(b).$

\begin{lemma} Let $V\sse W$, let $\nu$  be regular in $W$ and suppose
that:

$(a)$  $W$ is a $\nu$-c.c. extension of $V$,

$(b)$  For every $\l<\nu$ which is regular in $W$, there is
$\tau<\nu$ so that $cf^W(\tau)=\l$ and $\tau$ has a club subset in
$W$ which avoids points of countable $V$-cofinality.

In $W$ let ${\MQB}=\{p\sse\nu: p$ is closed and bounded in $\nu$
and avoids points of countable $V$-cofinality$\}$. Then in $W$,
${\MQB}$ is $<\nu$-distributive.
\end{lemma}
\begin{proof} This lemma first appeared in [G-N-S]. We prove it for
completeness. Suppose that $W=V[G]$, where $G$ is ${\MPB}$-generic
over $V$ for a $\nu$-c.c. forcing notion ${\MPB}$. Let $\l<\nu$ be
regular,  $q\in {\MQB}, \lusim{f}\in W^{{\MQB}}$ and

\begin{center}
$q\vdash  \lusim{f}:\l\lora ON$.
\end{center}

We find an extension of $q$ which decides $\lusim{f}$. By $(b)$ we can find
$\tau<\nu$ and $g:\l\lora\tau$ such that $cf^W(\tau)=\l$, $g$ is
normal and $C=ran(g)$ is a club of $\tau$ which avoids points of
countable $V$-cofinality.

In $W$, let $\theta>\nu$ be large enough regular. Working in $V$,
let $\ov{H} \prec V_\theta$  and $R:\tau\lora ON$ be such that

\begin{itemize}
\item $Card(\ov{H})<\nu$, \item $\ov{H}$ has $\l,\tau,\nu,{\MPB}$
and ${\MPB}$-names for $p,{\MQB},\lusim{f},g$ and $C$ as elements,
\item $ran(R)$ is cofinal in $\sup(\ov{H}\cap\nu),$ \item $R\upharpoonright
\beta\in\ov{H}$ for each $\beta<\tau$.
\end{itemize}

Let $H=\ov{H}[G]$. Then $\sup(H\cap\nu)=\sup(\ov{H}\cap\nu)$, since
${\MPB}$ is $\nu$-c.c., $H \prec V_{\theta}^{W}$ and if
$\g=sup(H\cap\nu)$, then $cf^W(\g)=cf^W(\tau)=\l$. For $\a<\l$ let
$\g_\a=R(g(\a))$. Then

\begin{itemize}
\item $\lan \g_\a: \a<\l\ran\in W$ is a normal sequence cofinal in
$\g$, \item $\lan \g_\a: \a<\b\ran\in H$ for  each $\b<\l$, since
$R\upharpoonright g(\b)\in\ov{H}$, \item $cf^V(\g_\a)=cf^V(g(\a))\neq \om$
for each $\a<\l$, since $R$ is normal and $g(\a)\in C$.
\end{itemize}

Let $D=\{\g_\a:\a<\l\}$. We define by induction a sequence $\lan
q_\eta:\eta<\l\ran$ of conditions in ${\MQB}$ such that for each
$\eta<\l$

\begin{itemize}
\item $q_0=q$, \item $q_\eta\in H$, \item $q_{\eta+1}\leq
q_{\eta}$, \item $q_{\eta+1}$ decides $\lusim{f}(\eta)$, \item
$D\cap(\max(q_\eta), \max(q_{\eta+1}))\neq\emptyset,$ \item
$q_\eta=\ds\bigcup_{\rho<\eta} q_\rho\cup\{\d_\eta\}$,  where
$\d_\eta=\ds\sup_{\rho<\eta}(\max (q_\rho))$, if $\eta$ is a limit
ordinal.
\end{itemize}

We may further suppose that
\begin{itemize}
 \item $q_{\eta}$'s are chosen in a uniform way (say via a well-ordering which is built in to $\ov{H}$).
\end{itemize}

We can define such a sequence using the facts that $H$ contains
all initial segments of $D$ and that $\d_\eta\in D$ for every
limit ordinal $\eta<\l$ (and hence $cf^V(\d_\eta)\neq\om$).

Finally let $q_{\lambda}=\ds\bigcup_{\eta<\l} q_\eta\cup\{\d_\l\}$, where
$\d_\l=\ds\sup_{\eta<\l}(\max (q_\eta))$. Then $\d_\l\in
D\cup\{\g\}$, hence $cf^V(\d_\l)\neq\om$. It follows that
$q_{\lambda}\in{\MQB}$ is well-defined. Trivially $q_{\lambda}\leq q$ and $q_{\lambda}$ decides
$\lusim{f}$. The lemma follows.
\end{proof}

 Let $V_1=V[G*H]$. The following is
obvious

\begin{lemma} $(a)$ $V$ and $V_1$ have the same cardinals and reals,

$(b)$ $V_1\models GCH$,
\end{lemma}
It follows from Theorem 2.1 that adding $\kappa$-many Cohen reals over $V_1$ adds $\kappa^{+}$-many Cohen reals over $V.$ This concludes the proof of Theorem 2.5.
\end{proof}

Let us show that some large cardinals are needed for the previous
result.

\begin{theorem} Assume that $V_1\supseteq V$ and $V_1$ and $V$ have the same
cardinals and reals. Suppose that for some uncountable cardinal
$\k$ of $V_1$, adding $\k$-many Cohen reals to $V_1$ produces
$\k^+$-many Cohen reals to $V$. Then in $V_1$ there is an inner
model with a measurable cardinal.
\end{theorem}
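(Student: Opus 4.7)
The plan is to establish the contrapositive: assume $V_{1}$ has no inner model with a measurable cardinal, and derive that the Cohen-real hypothesis must fail. The main tool is the Dodd--Jensen covering lemma, which under this assumption produces a core model $K = K^{V_{1}}$ satisfying $\GCH$, full covering in $V_{1}$ (every uncountable $X \subseteq \Ord$ in $V_{1}$ lies in some $Y \in K$ with $|Y|^{V_{1}} = |X|^{V_{1}}$), and weak covering ($(\lambda^{+})^{K} = \lambda^{+}$ for every $V_{1}$-singular cardinal $\lambda$). These constraints will pin down both the cardinal arithmetic of $V_{1}$ and the behaviour of cofinalities between $V$ and $V_{1}$.

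I would split the argument on $\cf^{V_{1}}(\kappa)$. For $\cf^{V_{1}}(\kappa) > \omega$ a counting argument suffices: $\MCB(\kappa)^{V_{1}}$ is c.c.c., so every real in $V_{1}[G]$ depends on only countably many coordinates of $G$, and covering together with $\GCH$ in $K$ pin down $(\kappa^{\aleph_{0}})^{V_{1}}$ to $\kappa$ (the computation propagates from $K$ via covering, and uses $\cf^{K}(\kappa) > \omega$, which must hold since otherwise a cofinal $\omega$-sequence of $K$ would live in $V_{1}$ and collapse the case). Hence $V_{1}[G]$ contains at most $\kappa$ reals, contradicting the hypothesised $\kappa^{+}$-many Cohen-over-$V$ reals.

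The substantive case is $\cf^{V_{1}}(\kappa) = \omega$. Here the key observation is that $V$ and $V_{1}$ share their reals, so any $V$-cofinal $\omega$-sequence in an ordinal $\alpha$ is already in $V_{1}$; in particular $\cf^{V}(\alpha) = \omega$ forces $\cf^{V_{1}}(\alpha) = \omega$. I would then extract, from the sequence $\lan s_{\alpha} : \alpha < \kappa^{+} \ran$ of Cohen-over-$V$ reals together with their $V_{1}$-names $\lan \dot{s}_{\alpha} : \alpha < \kappa^{+} \ran$ and countable supports $\lan A_{\alpha} : \alpha < \kappa^{+} \ran \subseteq [\kappa]^{\omega}$, the combinatorial witnesses appearing in Theorem 2.1: a mod-finite increasing family in $\ds\prod_{n}(\kappa_{n+1}\setminus\kappa_{n})$ of length $\kappa^{+}$ and, crucially, a club $C \subseteq \kappa^{+}$ avoiding points of countable $V$-cofinality. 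The existence of such a $C$ means that a $V$-stationary set of ordinals of $V$-cofinality $> \omega$ below $\kappa^{+}$ has been singularized to cofinality $\omega$ in $V_{1}$, and the standard Dodd--Jensen cofinality analysis together with weak covering forbids such large-scale cofinality singularization in the absence of a measurable inner model, delivering the desired contradiction.

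The main obstacle is the extraction step in the third paragraph: producing $C$ and the scale $\lan f_{\alpha}\ran$ from the bare hypothesis, rather than from the setup of Theorem 2.1 assumed outright. This requires a careful analysis of the supports and names $\dot{s}_{\alpha}$ inside $V_{1}$, combined with a covering-style argument in the spirit of Lemma 2.4 (the lemma of [G-N-S]), to locate within $\kappa^{+}$ the precise site of the $V$-to-$V_{1}$ cofinality mismatch that the hypothesis is implicitly exploiting.
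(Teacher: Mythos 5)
Your overall instinct---run the contrapositive through the Dodd--Jensen covering lemma---matches the paper's, but the proposal has a genuine gap at its centre and a broken side case. The central gap is the one you flag yourself: you propose to reverse-engineer the hypotheses of Theorem 2.1 (a scale and, crucially, a club $C\subseteq\kappa^{+}$ avoiding points of countable $V$-cofinality) from the bare assumption that $\kappa^{+}$-many Cohen reals over $V$ appear, and you have no argument for this extraction. Theorem 2.1 gives sufficient conditions, not necessary ones, and nothing in your sketch shows that the existence of such a club is forced by the hypothesis. The paper sidesteps this entirely by arguing in the opposite direction: from covering it deduces that $([\kappa^{+}]^{\leq\omega_{1}})^{V}$ is cofinal in $([\kappa^{+}]^{\leq\omega})^{V_{1}}$, and then, using $\omega_{1}^{V}=\omega_{1}^{V_{1}}$ together with the identity of the reals, that $([\kappa^{+}]^{\leq\omega})^{V}=([\kappa^{+}]^{\leq\omega})^{V_{1}}$. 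Once $V$ and $V_{1}$ have the same countable sets of ordinals, the two-model situation collapses to the one-model one, and the basic fact from the introduction (adding $\kappa$ Cohen reals never yields $\kappa^{+}$ Cohen reals over the same ground model) gives the contradiction directly, with no case split on $\cf(\kappa)$ and no club to locate. A second, related omission: all your covering statements are relative to $K^{V_{1}}$ and $V_{1}$, but to say anything about $V$ you need the core model to lie inside $V$; this requires the claim $\mathcal{K}(V)=\mathcal{K}(V_{1})$, which the paper proves from the equality of cardinals by a mouse argument and which your sketch never addresses.

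Separately, your first case ($\cf^{V_{1}}(\kappa)>\omega$) does not work as stated. Covering plus $\GCH$ in $K$ does not pin $(\kappa^{\aleph_{0}})^{V_{1}}$ down to $\kappa$: covering only gives $(\kappa^{\aleph_{0}})^{V_{1}}\leq(\kappa^{\aleph_{1}})^{K}\cdot(2^{\aleph_{0}})^{V_{1}}$, and the absence of an inner model with a measurable places no bound whatsoever on $(2^{\aleph_{0}})^{V_{1}}$ (add many Cohen reals to $L$). So $V_{1}[G]$ may well contain $\kappa^{+}$ or more reals, and the counting argument cannot rule out the hypothesis in that case. The case distinction is in any event unnecessary once the countable-set identification above is in hand.
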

\begin{proof} Suppose on the contrary that in $V_1$ there is no inner
model with a measurable cardinal. Thus by Dodd-Jensen covering
lemma (see [D-J 1,2]) $(\mathcal{K}(V_1),V_1)$ satisfies the covering lemma, where $\mathcal{K}(V_1)$ is the Dodd-Jensen core model as computed in $V_1$.
\begin{claim}
$\mathcal{K}(V)=\mathcal{K}(V_1)$
\end{claim}
\begin{proof}
The claim is well-known and follows from the fact that $V$ and $V_1$ have the same cardinals. We present a proof for completeness \footnote{Our proof is the same as in the proof of [Sh 2, Theorem VII. 4.2(1)].}. Suppose not. Clearly $\mathcal{K}(V) \subseteq \mathcal{K}(V_1),$ so let $A \subseteq \alpha, A\in \mathcal{K}(V_1), A\notin \mathcal{K}(V).$ Then there is a mouse of $\mathcal{K}(V_1)$ to which $A$ belongs, hence there is such a mouse of $\mathcal{K}(V_1)$-power $\alpha.$ It then follows that for every limit cardinal $\lambda>\alpha$ of $V_1$ there is a mouse with critical point $\lambda$ to which $A$ belongs, and the filter is generated by end segments of
\begin{center}
$\{\chi: \chi <\l, \chi$ a cardinal in $V_1 \}.$
\end{center}

As $V$ and $V_1$ have the same cardinals, this mouse is in $V$, hence in $\mathcal{K}(V).$
\end{proof}

Let us denote this common core model by $\mathcal{K}$.  Then $\mathcal{K}\sse V$, and hence $(V, V_1)$ satisfies the
covering lemma. It follows that $([\k^+]^{\leq\om_1})^V$ is
unbounded in $([\k^+]^{\leq\om})^{V_1}$ and since
$\om_1^V=\om_1^{V_1}$, we can easily show that
$([\k^+]^{\leq\om})^V$ is unbounded in $([\k^+]^{\leq\om})^{V_1}$.
Since $V_1$ and $V$ have the same reals, $([\k^+]^{\leq\om})^V=
([\k^+]^{\leq\om})^{V_1}$ and we get a contradiction.
\end{proof}

If we relax our assumptions, and allow some cardinals to collapse,
then no large cardinal assumptions are needed.

\begin{theorem} $(a)$ Suppose $V$ is a model of $GCH$. Then there is a generic
extension $V_1$ of $V$ satisfying $GCH$ so that
the only
cardinal of $V$ which is collapsed in $V_1$ is $\aleph_1$ and such that
adding
$\aleph_\om$-many Cohen reals to $V_1$ produces
$\aleph_{\om+1}$-many of them over $V$.

$(b)$ Suppose $V$ satisfies $GCH$. Then there is a generic
extension $V_1$ of $V$ satisfying $GCH$ and having the same reals
as $V$ does, so that
the
only cardinals of $V$ which are collapsed in $V_1$ are $\aleph_2$
and $\aleph_3$ and
 such that adding $\aleph_\om$-many Cohen reals to $V_1$
produces $\aleph_{\om+1}$-many of them over $V$.
\end{theorem}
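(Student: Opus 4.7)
The plan is to derive both parts from Theorem~2.1 with $\kappa=\aleph_\omega$ and $\lambda=\aleph_{\omega+1}$. In $V$ conditions (a)--(d) of Theorem~2.1 hold under GCH: $\aleph_{\omega+1}$ is regular, $\langle\aleph_{n+1}:n<\omega\rangle$ is cofinal in $\aleph_\omega$, and a scale of length $\aleph_{\omega+1}$ in $\prod_{n<\omega}(\aleph_{n+1}\setminus\aleph_n)$ is built in $V$ by transfinite induction using $\aleph_\omega^{\aleph_0}=\aleph_{\omega+1}$. All four conditions persist in $V_1$ as long as the forcing used to produce $V_1$ preserves $\aleph_\omega$ and $\aleph_{\omega+1}$. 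The substantive task is condition~(e): produce in $V_1$ a club $C\subseteq\aleph_{\omega+1}^V$ that avoids ordinals of countable $V$-cofinality.

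For part (a), I would take $V_1=V[G][H]$, where $G$ is $\Col(\omega,\omega_1)$-generic over $V$ and, in $V[G]$, $H$ is generic for the forcing $\mathbb{Q}$ consisting of closed bounded subsets of $\aleph_{\omega+1}^V$ avoiding points of countable $V$-cofinality, ordered by end-extension. The collapse has size $\aleph_1^V$, is $\aleph_2$-c.c., collapses only $\aleph_1^V$, and preserves GCH. To see that $\mathbb{Q}$ preserves cardinals of $V[G]$, a standard $\Delta$-system argument gives the $\aleph_{\omega+1}$-c.c., while Lemma~2.7, applied in $W=V[G]$ with $\nu=\aleph_{\omega+1}^V$, gives $<\aleph_{\omega+1}$-distributivity. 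Its hypothesis~(a) is immediate; for hypothesis~(b) one must, for each regular $\lambda<\aleph_{\omega+1}$ of $V[G]$, produce $\tau<\aleph_{\omega+1}$ with $\cf^{V[G]}(\tau)=\lambda$ and a club of $\tau$ in $V[G]$ missing every $V$-cof-$\omega$ ordinal. The collapse of $\aleph_1^V$ is the crucial resource: the new $\omega$-sequences in $V[G]$ let us interleave $V$-cof-$\aleph_1^V$ ordinals cofinally in $\tau$ so that at every $\omega$-limit of the indexing the supremum lands on a $V$-cof-$\aleph_1^V$ (hence $V$-cof-uncountable) ordinal. Then $C=\bigcup H$ witnesses condition~(e) in $V_1$, and Theorem~2.1 yields part (a).

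For part (b), the template is the same, but the preliminary collapse must add no new reals while collapsing exactly $\aleph_2^V$ and $\aleph_3^V$. A purely countably closed collapse like $\Col(\omega_1,\aleph_3)$ will not suffice: it adds no new $\omega$-sequences of ordinals, so every $\omega_1$-cofinal sequence inside a $V_1$-$\omega_1$-cofinal $\tau$ has its $\omega$-limit suprema landing on $V$-cof-$\omega$ ordinals, and hypothesis~(b) of Lemma~2.7 fails at $\lambda=\aleph_1^V$. The remedy is to use a Namba-style forcing that collapses $\aleph_3^V$ (and thereby $\aleph_2^V$) to $\aleph_1^V$ without adding reals, yet introduces new $\omega$-sequences of ordinals in the interval $(\aleph_1^V,\aleph_{\omega+1}^V)$. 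With this choice hypothesis~(b) of Lemma~2.7 can be verified as in part (a), and the rest of the argument goes through.

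The main obstacle in both parts is the verification of Lemma~2.7's hypothesis (b) in the intermediate model; this is what dictates the choice of preliminary collapse. The collapse must destroy exactly the stated cardinals of $V$, preserve reals in part (b), and supply enough new $\omega$-sequences of ordinals in $(\aleph_1^V,\aleph_{\omega+1}^V)$ so that the club construction avoids $V$-cof-$\omega$ ordinals at its $\omega$-limits. Once this is arranged Theorem~2.1 applies directly and produces $\aleph_{\omega+1}$-many Cohen reals over $V$ from $\aleph_\omega$-many over $V_1$.
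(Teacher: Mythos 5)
Your overall framework --- reduce everything to Theorem 2.1, observe that conditions (a)--(d) are automatic under GCH for $\kappa=\aleph_\omega$, $\lambda=\aleph_{\omega+1}$, and isolate condition (e) as the real task, to be handled by a preliminary collapse (Levy in (a), Namba in (b)) followed by club forcing --- matches the paper. But there is a genuine gap at the key step: hypothesis (b) of Lemma 2.7 cannot be verified in $W=V[G]$ with $G$ being $\Col(\omega,\omega_1)$-generic, because it is false there. Since $\Col(\omega,\omega_1)$ has size $\aleph_1^V$ and hence the $\aleph_2^V$-c.c., for any $\tau<\aleph_{\omega+1}^V$ with $\cf^V(\tau)\ge\aleph_2^V$ and any club $C$ of $\tau$ in $V[G]$, one can work in $V$ with a name $\dot C$ and trap an increasing sequence $\langle\gamma_n:n<\omega\rangle$ such that every condition forces $\dot C\cap(\gamma_n,\gamma_{n+1})\neq\emptyset$ (the suprema defining $\gamma_{n+1}$ exist because $\cf^V(\tau)>|\Col(\omega,\omega_1)|$); then $\sup_n\gamma_n$ has countable $V$-cofinality and is forced to lie in $C$. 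So no club of any such $\tau$ --- in particular of $\aleph_2^V,\aleph_3^V,\dots$ --- can avoid points of countable $V$-cofinality in $V[G]$, and your ``interleave $V$-cof-$\aleph_1^V$ ordinals'' construction cannot be carried out: at a limit stage of the recursion the supremum is whatever the earlier choices give, and the chain-condition argument shows you cannot steer all of the countable-cofinality suprema onto $V$-cof-$\omega_1$ ordinals. The same objection applies to part (b), where you assert the verification goes ``as in part (a)''.

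The paper's remedy consists of two further layers of forcing that your proposal omits. First, after the collapse one shoots a club through the stationary set $S=\{\alpha<\omega_2^V:\cf^V(\alpha)=\omega_1\}$ with \emph{countable conditions}; this forcing is not $\aleph_2^V$-c.c., which is how it evades the argument above, and it behaves well precisely because $S$ remains stationary after the collapse. This produces a club of $\aleph_2^V=\omega_1^{V[G]}$ avoiding points of countable $V$-cofinality, i.e.\ the first witness for hypothesis (b) of Lemma 2.7. Second, one runs an Easton-support iteration of the club-avoiding posets through all regular $\nu$ with $\aleph_2\le\nu\le\aleph_{\omega+1}$: at each stage the clubs added at earlier stages supply exactly the witnesses needed to apply Lemma 2.7 and conclude that the next poset is $<\nu$-distributive. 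Only at the top of this iteration does one obtain the club of $\aleph_{\omega+1}$ required for condition (e). In part (b) the Namba forcing plays the role of the Levy collapse and the Veli\v{c}kovi\'c--Woodin preservation of $S=\{\alpha<\omega_3^V:\cf^V(\alpha)=\omega_2\}$ replaces the stationarity argument, but the club-shooting step and the Easton iteration are still indispensable.
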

\begin{proof} $(a)$ Working in $V$, let
${\MPB}=Col(\aleph_0,\aleph_1)$ and let $G$ be ${\MPB}$-generic
over $V$. Also let $S=\{\a<\om_2:cf^V(\a)=\om_1\}$. Then $S$
remains stationary in $V[G]$. Working in $V[G]$, let ${\MQB}$ be
the standard forcing notion for adding a club subset of $S$ with
countable conditions, and let $H$ be ${\MQB}$-generic over $V[G]$.
Let $C=\bigcup H$. Then $C$ is a club subset of
$\om_1^{V[G]}=\om_2^V$ such that $C\sse S$, and in particular $C$
avoids points of countable $V$-cofinality. Working in $V[G*H],$
let
\begin{center}
${{\MRB}}=\lan \lan {\MPB}_\nu:\aleph_2 \leq \nu \leq
\aleph_{\om+2},\nu$ regular $\ran,\lan
\lusim{{\MQB}}_\nu:\aleph_2\leq \nu\leq\aleph_{\om+1},\nu$ regular
$\ran\ran$
\end{center}
 be the Easton support iteration by letting
$\lusim{{\MQB}}_\nu$ name the poset $\{p\ss\nu:p$ is closed and
bounded in $\nu$ and avoids points of countable $V$-cofinality$\}$
as defined in $V[G*H]^{{\MPB}_\nu}$. Let
\begin{center}
 $K=\lan\lan G_\nu:
\aleph_2\leq\nu\leq\aleph_{\om+2},\nu$ regular $\ran,\lan H_\nu:
\aleph_2\leq\nu\leq\aleph_{\om+1},\nu$ regular $\ran\ran$
\end{center}
 be ${\MRB}$-generic over $V[G*H]$ (i.e $G_\nu$ is
${\MPB}_\nu$-generic over $V[G*H]$ and $H_\nu$ is
${\MQB}_\nu=\lusim{{\MQB}}_\nu[G_\nu]$-generic over
$V[G*H*G_\nu])$. Then

\begin{lemma} $(a)$ ${\MPB}_\nu$ adds a club disjoint from $\{\a<\l:
cf^V(\a)=\om\}$ for each regular $\l\in(\aleph_1,\nu)$,

$(b)$ (By 2.7) $V[G*H*G_\nu]\models ``  {\MQB}_\nu$ is
$<\nu$-distributive'',

$(c)$ $V[G*H]$ and $V[G*H*K]$ have the same cardinals and reals,
and satisfy $GCH$,

$(d)$ In $V[G*H*K]$ there is a club subset $C$ of $\aleph_{\om+1}$
which avoids points of countable $V$-cofinality.
\end{lemma}

Let $V_1=V[G*H*K]$.  By above results, $V_1$ satisfies $GCH$ and
the only cardinal of $V$ which is collapsed in $V_1$ is
$\aleph_1$. The proof of the fact that adding $\aleph_\om$-many
Cohen reals over $V_1$ produces $\aleph_{\om+1}$-many of them
over $V$ follows from Theorem 2.1.

$(b)$ Working in $V$, let ${\MPB}$ be the following version of
Namba forcing:
\begin{center}
 ${\MPB}=\{T\sse\om_2^{<\om}: T$ is a tree and for
every $s\in T$, the set $\{t\in T: t\supset s\}$  has size
$\aleph_2\}$
\end{center}
 ordered by inclusion. Let $G$ be ${\MPB}$-generic
over $V$. It is well-known that forcing with ${\MPB}$ adds no new
reals, preserves cardinals $\geq\aleph_4$ and that
$|\aleph_2^V|^{V[G]}= |\aleph_3^V|^{V[G]}=\aleph_1^{V[G]}=
\aleph_1^V$ (see [Sh 1]). Let $S=\{\a< \om_3:cf^V(\a)=\om_2\}$.

\begin{lemma} $S$ remains stationary in $V[G]$.
\end{lemma}
\begin{proof} See [Ve-W, Lemma 3].
\end{proof}

Now the rest of the proof is exactly as in $(a)$.

The Theorem follows
\end{proof}

By the same line but using stronger initial assumptions, adding
$\k$-many Cohen reals may produce $\l$-many of them for $\l$ much
larger than $\k^+$.

\begin{theorem} Suppose that $\k$ is a strong cardinal, $\l\geq\k$ is
regular and $GCH$ holds. Then there exists a cardinal preserving
generic extension $V_1$ of $V$ having the same reals as $V$ does,
so that adding $\k$-many Cohen reals over $V_1$ produces $\l$-many
of them over $V$.
\end{theorem}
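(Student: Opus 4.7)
The plan is to follow the strategy of Theorem 2.5 and verify conditions (a)--(e) of Theorem 2.1 in an appropriate extension $V_1$ of $V$. The new ingredient, made available by the strength of $\k$, is to replace the product of Magidor forcings used in Theorem 2.5 by a Gitik--Magidor extender-based Prikry forcing, which produces $\l$-many mod-finite increasing functions in a single step rather than only $\k^+$-many.

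\textbf{Step 1 (extender-based Prikry forcing).} Using that $\k$ is strong and $\l\ge\k$ is regular, fix a $(\k,\l)$-extender $E$ with ultrapower embedding $j_E\colon V\to M$ satisfying $\crit(j_E)=\k$, $\l\le j_E(\k)$, and with $M$ closed enough to carry the required structure. Let $\MPB_E$ be the associated extender-based Prikry forcing. By the standard analysis, $\MPB_E$ preserves all cardinals and GCH, satisfies the Prikry property, and hence adds no new reals. In $V[\MPB_E]$, $\k$ becomes singular of cofinality $\om$, witnessed by a generic sequence $\lan \k_n:n<\om\ran$ whose entries may be assumed to be $V$-measurables cofinal in $\k$. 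Moreover, the forcing adds functions $\lan f_\a:\a<\l\ran$ with $f_\a\in\ds\prod_{n<\om}(\k_{n+1}\setminus\k_n)$ (after the standard thinning against the measure-one sets from $E$) such that $\a<\b<\l$ implies $f_\a(n)<f_\b(n)$ for all large $n$, the coherence being supplied by the coordinate structure of $E$. This yields (a)--(d) of Theorem 2.1.

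\textbf{Step 2 (club at $\l$ avoiding $V$-countable cofinality).} To secure (e), interlace $\MPB_E$ with an Easton-support iteration $\MRB$, which at each $V$-regular $\mu\in(\k,\l]$ of uncountable $V$-cofinality adds a club subset of $\mu$ avoiding points of countable $V$-cofinality, using the poset from Lemma 2.7 as the stage component. Each component is $<\mu$-distributive, and a standard Easton argument together with a suitable Laver-type preparation (to preserve the strength of $\k$) shows that $\MRB$ preserves cardinals and GCH and adds no new reals. After forcing with $\MRB*\MPB_E$, the hypothesis of Lemma 2.7 holds at $\nu=\l$: $\l$ is regular, the combined forcing is $\l$-c.c., and for every regular $\mu<\l$ in the extension there are many $\tau<\l$ with $\cf(\tau)=\mu$ carrying a club avoiding $V$-countable cofinality. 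A further $<\l$-distributive forcing $\MQB$ then adds the desired club $C\subseteq\l$.

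\textbf{Step 3 (conclusion).} Let $V_1=V[\MRB*\MPB_E*\MQB]$. Then $V_1$ and $V$ have the same cardinals and reals, $V_1\models\GCH$, and $V_1$ contains both the scale $\lan f_\a:\a<\l\ran$ and the club $C\subseteq\l$, so conditions (a)--(e) of Theorem 2.1 apply to the pair $V\subseteq V_1$ with parameters $\k,\l$ and $\lan \k_n:n<\om\ran$. Theorem 2.1 then yields that adding $\k$-many Cohen reals over $V_1$ produces $\l$-many Cohen reals over $V$.

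The principal obstacle is the setup in Step 1: choosing $E$ and the Laver-style preparation so that $\MPB_E$ both preserves all cardinals and GCH throughout $[\k,\l]$ and delivers the scale $\lan f_\a:\a<\l\ran$ honestly inside the prescribed product $\ds\prod_n(\k_{n+1}\setminus\k_n)$. Once this is arranged, the interaction with $\MRB$ (preservation of the Prikry property and of the scale) and the final application of Lemma 2.7 and Theorem 2.1 are routine adaptations of the arguments for Theorems 2.5 and 2.8.
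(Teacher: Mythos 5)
Your overall architecture (extender-based Prikry forcing to obtain the scale $\lan f_\a:\a<\l\ran$, plus an Easton iteration of club-shooting posets to obtain condition (e)) matches the paper's, but there is a genuine gap in your Step 2: you have no mechanism that makes the very first club-shooting component, at $\nu=\k^+$, distributive. Lemma 2.7 is not free: its hypothesis (b) demands that already in the model $W$ over which you shoot the club there exist, for every uncountable $W$-regular $\mu<\nu$, ordinals $\tau$ of $W$-cofinality $\mu$ carrying a club that avoids points of countable $V$-cofinality. The $\om$-th element of any such club has countable $W$-cofinality, so it must be an ordinal whose cofinality has been changed from an uncountable $V$-value to $\om$. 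Neither a Laver-type preparation nor the extender-based forcing supplies such witnesses cofinally often below $\k^+$ (the extender-based forcing essentially singularizes only $\k$ itself), and over $V$ alone every closed set of order type at least $\om+1$ contains a point of countable cofinality; hence the poset $\{p\sse\k^+: p$ closed and bounded, avoiding points of countable $V$-cofinality$\}$ cannot add a genuine club without changing cofinalities or collapsing cardinals. As written, your Step 2 does not go through.

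The paper fills exactly this hole with a preliminary Radin forcing $\MRB_{\vec u}$ at $\k$, with $\vec u$ a measure sequence built out to the first weak repeat point: it preserves cardinals, GCH and reals, keeps $\k$ strong, and adds a club $C_\k\sse\k$ consisting of $V$-inaccessibles whose limit points realize every uncountable cofinality below $\k$ while their $V$-cofinality stays uncountable; restricting $C_\k$ below such limit points provides the witnesses for Lemma 2.7(b), first at $\k^+$ and then inductively up the Easton iteration. The ordering of the steps also matters: the Radin forcing and the club iteration come first (the iteration is $\le\k$-distributive, so the extender/nice system on $\l$ survives it), and the extender-based forcing is applied last. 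Note too that the extender-based forcing makes $2^\k=\l$, so your claim that it preserves GCH is false when $\l>\k^+$; this is harmless since the theorem does not assert GCH in $V_1$, but it should be corrected. With the Radin preparation inserted and the steps reordered, your argument becomes the paper's.
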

\begin{proof} Working in $V$,
build for each $\delta$ a measure sequence $\vec{u}_{\delta}$ from a $j$ witnessing ``$\k$ is $\delta$-strong'' out to the first weak repeat point. Find $\vec{u}$ such that $\vec{u}=\vec{u}_{\delta}$ for unboundedly many $\delta.$
 Let
${\MRB}_{\vec{u}}$ be the corresponding Radin forcing notion and
let $G$ be  ${\MRB}_{\vec{u}}$-generic over $V$. Then

\begin{lemma} $(a)$ Forcing with ${\MRB}_{\vec{u}}$ preserves cardinals
and the $GCH$ and adds no new reals,

$(b)$ In $V[G]$, there is a club $C_\k\sse \k$ consisting of
inaccessible cardinals of $V$ and $V[G]=V[C_\k]$,

$(c)$ $\k$ remains strong in $V[G]$.
\end{lemma}
\begin{proof} See [Git 2] and [Cu].
\end{proof}

Working in $V[G]$, let
\begin{center}
$E=\lan\lan U_\a : \a<\l\ran , \langle \pi_{\a\b}: \a
\leq_E\b\ran\ran$
\end{center}
be a nice system satisfying conditions (0)-(9) in [Git 2, page
37]. Also let
\begin{center}
 ${\MRB}=\lan\lan
{\MPB}_\nu : \k^+\leq\nu\leq\l^+,\nu$ regular
$\ran,\lan\lusim{{\MQB}}_\nu:\k^+\leq\nu\leq\l,\nu$ regular
$\ran\ran$
\end{center}
be the Easton support iteration by letting $\lusim{{\MQB}}_\nu$
name the poset $\{p\sse\nu:p$ is closed and bounded in $\nu$ and
avoids points of countable $V$-cofinality$\}$ as defined in
$V[G]^{{\MPB}_\nu}$. Let
\begin{center}
$K=\lan\lan G_\nu :\k^+\leq\nu\leq\l^+,\nu $ regular $\rangle,
\langle H_\nu:\k^+\leq\nu\leq\l, \nu$ regular $\ran\ran$
\end{center}
 be
${\MRB}$-generic over $V[G]$. Then

\begin{lemma} $(a)$ ${\MPB}_\nu$ adds a club disjoint form $\{\a<\d:
cf^V(\a)=\om\}$ for each regular $\d\in(\k,\nu)$,

$(b)$ (By 2.7) $V[G*G_\nu]\models ``  {\MQB}_\nu=
\lusim{{\MQB}}_\nu [G_\nu]$  is  $<\nu$-distributive'',

$(c)$ $V[G]$ and $V[G*K]$ have the same cardinals, and satisfy
$GCH$,

$(d)$ ${\MRB}$ is $\leq\k$-distributive, hence forcing with
${\MRB}$ adds no new $\k$-sequences,

$(e)$ In $V[G*K]$, for each regular cardinal $\k\leq\nu\leq\l$
there is a club $C_\nu\sse\nu$ such that $C_\nu$ avoids points of
countable $V$-cofinality.
\end{lemma}

By 2.16.$(d)$, $E$ remains a nice system in $V[G*K]$, except that
the condition (0) is replaced by $(\l,\leq_E)$ is $\k^+$-directed
closed. Hence working in $V[G*K]$, by
 results of [Git-Mag 1,2] and [Mer], we can find a forcing notion $S$ such that if $L$ is $S$-generic over $V[G*K]$ then

\begin{itemize}
\item $V[G*K]$ and $V[G*K*L]$ have the same cardinals and reals,
\item In $V[G*K*L]$, $2^\k=\l$, $cf(\k)=\aleph_0$ and there is an
increasing sequence $\lan \k_n: n<\om\ran$ of regular cardinals
cofinal in $\k$ and an increasing (mod finite) sequence $\lan
f_\a:\a<\l\ran$ in $\ds\prod_{n<\om}(\k_{n+1}\setminus\k_{n})$.
\end{itemize}

Let $V_1=V[G*K*L]$. Then $V_1$ and $V$ have the same cardinals and
reals.  The fact that adding $\k$-many Cohen reals over $V_1$
produces $\l$-many Cohen reals over $V$ follows from Theorem 2.1.
\end{proof}

If we allow many cardinals between $V$ and $V_1$ to collapse, then
using [Git-Mag 1,Sec 2] one can obtain the following

\begin{theorem} Suppose that there is a strong cardinal and $GCH$ holds. Let
$\a<\om_1$. Then there is a model $V_1\supset V$ having the same
reals as $V$ and satisfying $GCH$ below $\aleph^{V_1}_{\om}$ such
that adding $\aleph^{V_1}_{\om}$-many Cohen reals to $V_1$
produces $\aleph^{V_1}_{\a+1}$-many of them over $V$.
\end{theorem}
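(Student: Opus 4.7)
The plan is to apply Theorem 2.1 with $\k=\aleph_\om^{V_1}$ and $\l=\aleph_{\a+1}^{V_1}$, reducing the task to constructing $V_1\supset V$ with the same reals, with GCH below $\aleph_\om^{V_1}$, and containing all of (i) an increasing sequence $\lan\k_n:n<\om\ran$ of regular cardinals cofinal in $\k$, (ii) an increasing (mod finite) scale $\lan f_\b:\b<\l\ran$ in $\ds\prod_{n<\om}(\k_{n+1}\setminus\k_n)$, and (iii) a club $C\sse\l$ avoiding points of countable $V$-cofinality. Granted these, Theorem 2.1 immediately produces $\l$-many Cohen reals over $V$ from $\k$-many over $V_1$.

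To build such a $V_1$, I would follow the three-stage strategy of Theorem 2.11 but replace its last stage by the extender-based Prikry with interleaved collapses from [Git-Mag 1, Sec 2]. Starting from a strong $\k$ in $V$, fix a $V$-regular $\l\geq\k^+$ whose eventual position in the aleph-hierarchy of $V_1$ is to be $\a+1$. Stage one runs the Radin preparation of Lemma 2.15: cardinals and reals are preserved, $\k$ remains strong, and a club $C_\k\sse\k$ of $V$-inaccessibles is added. Stage two is the Easton-support iteration of Theorem 2.11 that adjoins, at each $V$-regular $\nu\in[\k^+,\l]$, a club avoiding points of countable $V$-cofinality; by Lemma 2.7 each factor is $<\nu$-distributive, so cardinals, reals and GCH are preserved and the club required in (iii) appears at $\l$. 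Stage three applies the extender-based Prikry with interleaved collapses from [Git-Mag 1, Sec 2], driven by a nice extender system $E$ of length $\l$: this simultaneously singularises $\k$ to cofinality $\om$ with the mod-finite scale $\lan f_\b:\b<\l\ran$ of (ii), and inserts Levy collapses between the Prikry levels so that in the final model $\k=\aleph_\om^{V_1}$ and $\l=\aleph_{\a+1}^{V_1}$.

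The main obstacle is the cardinal bookkeeping in stage three. For a prescribed countable $\a$, the collapsing functions interleaved between successive Prikry levels $\k_n,\k_{n+1}$ must be arranged so that in $V_1$ only countably many $V$-cardinals below $\k$ survive -- forcing $\k=\aleph_\om^{V_1}$ -- while the ordinal $\l$ lands exactly at the $(\a+1)$-st aleph. For $\a<\om$ a fixed finite pattern of collapses at each level suffices; for $\om\leq\a<\om_1$ one distributes countably many collapses across the Prikry levels so as to realise an arbitrary countable ordinal, exactly as in [Git-Mag 1, Sec 2]. Next one verifies that the combined forcing of stage three adds no new reals (a feature of the construction in [Git-Mag 1] under GCH), and that the club adjoined at $\l$ in stage two survives into $V_1$; the latter follows from the chain condition of the Prikry-with-collapses forcing, which prevents $\l$ from changing cofinality. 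Once these checks are in place, $V_1$ satisfies hypotheses (a)--(e) of Theorem 2.1, and the theorem follows.
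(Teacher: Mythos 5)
Your proposal follows the paper's own argument essentially verbatim: build $V[G*K]$ via the Radin preparation plus the Easton iteration of club forcings exactly as in Theorem 2.14, then replace the cardinal-preserving extender-based Prikry forcing by the version with interleaved collapses from [Git-Mag 1, Sec.\ 2] to arrange $\k=\aleph_\om^{V_1}$ and $2^\k=\aleph_{\a+1}^{V_1}$, and finish by Theorem 2.1. (Your citations of ``Theorem 2.11'' should read ``Theorem 2.14,'' but the content is the same.)
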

\begin{proof}
Proceed as in  Theorem 2.14 to produce the model $V[G*K].$ Then  working in $V[G*K]$,
 we can find a forcing notion $S$ such that if $L$ is $S$-generic over $V[G*K]$ then
 \begin{itemize}
\item $V[G*K]$ and $V[G*K*L]$ have the same reals,
\item In $V[G*K*L]$, cardinals $\geq \k$ are preserved, $\k=\aleph_{\omega}, GCH$ holds below $\aleph_{\omega},  2^\k=\aleph_{\alpha+1}$  and there is an
 increasing (mod finite) sequence $\lan
f_\b:\b<\aleph_{\a+1}\ran$ in $\ds\prod_{n<\om}(\aleph_{n+1}\setminus\aleph_{n})$.
\end{itemize}
 Let $V_1=V[G*K*L]$. Then $V_1$ and $V$ have the same
reals.  The fact that adding $\aleph_{\omega}^{V_1}$-many Cohen reals over $V_1$
produces $\aleph_{\alpha+1}^{V_1}$-many Cohen reals over $V$ follows from Theorem 2.1.

\end{proof}

\section{Models with the same cofinality function but different reals}

This section is completely devoted to the proof of the following
theorem.

\begin{theorem} Suppose that $V$ satisfies $GCH$. Then there is a cofinality
preserving generic extension $V_1$ of $V$ satisfying $GCH$ so that
adding a Cohen real over $V_1$ produces $\aleph_1$-many Cohen
reals over $V$.
\end{theorem}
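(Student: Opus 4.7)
The plan is to construct $V_1$ as a cofinality-preserving, $GCH$-preserving generic extension of $V$ that carries an analogue of the scale-plus-club combinatorics used in Theorem 2.1, adapted to the case $\kappa=\omega$ and $\lambda=\omega_1$.  Since one Cohen real over $V_1$ is equivalent to $\omega$-many Cohen sub-reals indexed by $\omega$ or $[\omega]^{<\omega}$, the coding schemes of Section 2 can in principle be reused to extract $\omega_1$-many Cohen reals over $V$, provided $V_1$ supplies the correct combinatorial data.

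First I would identify a forcing notion $\MPB$ over $V$ that adds an $\omega_1$-long mod-finite increasing sequence $\langle f_\alpha : \alpha < \omega_1 \rangle$ in $\prod_n A_n$ for some sequence of finite sets $A_n$ with $|A_n|\to\infty$ (a substitute for condition $(d)$ of Theorem 2.1 specialised to $\kappa=\omega$), together with whatever auxiliary objects play the role of the club $C$ of condition $(e)$.  The conditions of $\MPB$ must be chosen so that $\MPB$ preserves all cardinals and cofinalities of $V$ and maintains $GCH$; the natural candidates come from countable approximations for which standard closure or chain-condition arguments yield the required preservation.

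With $V_1 = V[G]$ in hand (for $G$ a $\MPB$-generic), I would add a single Cohen real $c$ over $V_1$, view it as $\omega$-many Cohen sub-reals $r_{n,m}$ together with shift sub-reals $r'_\eta$ indexed by $[\omega]^{<\omega}$, and define $\langle s_\alpha : \alpha < \omega_1 \rangle$ in $V_1[c]$ following the Case $\lambda>\kappa^+$ recipe of Theorem 2.1: along a tree $T \subseteq [\omega_1]^{<\omega}$ with projection $\pi(T)\subseteq[\omega]^{<\omega}$, set $s_\alpha(n) = r_{n,f_\alpha(k_\alpha + n)}(0)$ with shifts $k_\alpha$ read off from the $r'$'s.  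The proof that $\langle s_\alpha : \alpha < \omega_1 \rangle$ is $\MCB(\omega_1)$-generic over $V$ will then mirror Lemma 2.3, extending any Cohen condition on $c$ to decide an element of any $V$-dense open $D \subseteq \MCB(I)$ for countable $I \in V$.

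The main obstacle is the impossibility of condition $(e)$ at $\lambda=\omega_1$: no club $C \subseteq \omega_1$ can avoid every point of countable $V$-cofinality, since every limit ordinal below $\omega_1$ has $V$-cofinality $\omega$.  The role played by $C$ in the Section 2 proofs---guaranteeing that any countable $I \in V$ meets only finitely many intervals $[\alpha^*, \alpha^{**})$ arising from the tree---must therefore be carried by a new gadget added by $\MPB$.  Engineering $\MPB$ so that it simultaneously adds the required scale, supplies a replacement for the club, and remains cofinality- and $GCH$-preserving is the central technical challenge of the section, and it is why the entire section is devoted to a single theorem.
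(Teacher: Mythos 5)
You have correctly located the obstruction: condition $(e)$ of Theorem 2.1 is vacuous at $\lambda=\omega_1$, since every countable limit ordinal has $V$-cofinality $\omega$, so the Section 2 machinery cannot be transplanted directly. But your proposal stops exactly where the proof has to begin: you defer the construction of the forcing $\MPB$ and the identification of the ``replacement gadget'' to an unspecified ``central technical challenge,'' and that challenge is the entire content of the theorem. As written, the proposal contains no proof.

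Two more specific points. First, the gadget is not a scale plus a club analogue. The almost disjoint family $\langle f_i : i<\omega_1\rangle$ of functions from $\omega$ into $\omega$ needed for the coding already exists in $V$ under $\CH$; nothing has to be forced to obtain it. What has to be forced is a partition $\langle I_n : n<\omega\rangle$ of $\omega_1$ into $\omega$ pieces with a strong genericity property relative to $V$: for every $I\subseteq\omega_1$ with $I\in V$ and every finite $J\subseteq\omega$ there is a finite set $a$ such that the ``colors'' of any finite $b\subseteq I\setminus a$ can be forced to be pairwise distinct and to avoid $J$ (condition $(2b)$ of Definition 3.4). Merely requiring that no $I_n$ contain an infinite set of $V$ turns out to be insufficient. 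This partition is what plays the role of the club: it lets a condition on the single Cohen real control the finitely many coordinates relevant to a given $V$-dense open subset of ${\MCB}(I)$. Second, the forcing realizing this --- conditions are pairs $\langle p_0,\lusim{p}_1\rangle$ with $p_0$ a Mathias/Prikry condition for a nonprincipal ultrafilter $U$ on $\omega$ and $\lusim{p}_1$ a $\MPB_U$-name for a countable partial coloring --- must be shown to preserve $\omega_1$ and cofinalities, and this requires genuine fusion arguments (Lemmas 3.9 and 3.13) along a tree of conditions indexed by $[\omega]^{<\omega}$, which occupy most of the section. None of this is predictable from the Section 2 template, and none of it appears in your outline, so the gap is not a matter of routine detail.
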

The basic idea of the proof will be to split $\om_1$
into $\om$ sets such that none of them will contain an infinite
set of $V$. Then something like in section 2 will be used for
producing Cohen reals. It turned out however that just not
containing an infinite set of $V$ is not enough. We will use a
stronger property. As a result the forcing turns out to be more
complicated. We are now going to define the forcing sufficient for
proving the theorem. Fix a nonprincipal ultrafilter $U$ over
$\om$.

\begin{definition} Let $(\MPB_U,\leq,\leq^*)$ be the Prikry (or in this
context Mathias) forcing with $U$, i.e.
\begin{itemize}
\item $\MPB_U=\{\lan s,A\ran\in [\om]^{<\om}\times U: \max(s)<\min(A)\},$ \item $\lan t,B\ran\leq \lan s,A\ran \Longleftrightarrow t$
end extends $s$ and $(t\setminus s)\cup B\sse A$, \item $\lan
t,B\ran\leq^* \lan s,A\ran \Longleftrightarrow t=s$ and $B\sse A$.
\end{itemize}
\end{definition}

We call $\leq^*$ a direct or $*$-extension. The following are the
basic facts on this forcing that will be used further.

\begin{lemma} $(a)$ The generic object of $\MPB_U$ is generated by a real,

$(b)$ $(\MPB_U,\leq)$ satisfies the $c.c.c.$,

$(c)$ If $\lan s,A\ran \in \MPB_U$ and $b\sse\om\setminus (\max(s)+1)$ is finite, then there is a $*$-extension of $\lan s,A\ran$,
forcing the generic real to be disjoint to $b$.
\end{lemma}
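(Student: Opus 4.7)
The plan is to verify each of the three items separately, using only the definition of $\MPB_U$ and standard properties of the non-principal ultrafilter $U$.

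For part $(a)$, I would argue by density. For each $n<\om$, the set $D_n = \{\lan s,A\ran \in \MPB_U : |s| \geq n\}$ is dense: given any $\lan s,A\ran$, pick the first $n - |s|$ elements of $A$ (which is infinite, being in $U$), append them to $s$, and shrink $A$ accordingly. Since $(\MPB_U,\leq)$ is c.c.c.\ (item $(b)$), any generic filter $G$ meets every $D_n$, and the $s$-parts of conditions in $G$ form a compatible chain whose union $r \in [\om]^{\om}$ is an infinite real. The generic $G$ is then uniformly recoverable from $r$ as $\{\lan s,A\ran \in \MPB_U : s \sqsubseteq r \text{ and } r \setminus s \subseteq A\}$, so $G$ is generated by $r$.

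For part $(b)$, this is a standard $\Delta$-system style argument, but in fact simpler: since $[\om]^{<\om}$ is countable, any uncountable family of conditions $\{\lan s_\ga, A_\ga\ran : \ga < \om_1\}$ must contain two members $\lan s_\ga, A_\ga\ran$ and $\lan s_\gb, A_\gb\ran$ with $s_\ga = s_\gb = s$. Then $\lan s, A_\ga \cap A_\gb\ran$ is a common extension (using that $U$ is closed under finite intersections), so the family is not an antichain.

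For part $(c)$, since $b$ is finite and $U$ is a non-principal ultrafilter, $\om \setminus b \in U$, hence $A \setminus b \in U$. Set $A' = A \setminus b$. Then $\lan s, A'\ran \leq^* \lan s, A\ran$, and any $\leq$-extension $\lan t, B\ran \leq \lan s, A'\ran$ has $t \setminus s \subseteq A' \subseteq \om \setminus b$, so the generic real (the union of the $s$-parts) is forced to be disjoint from $b$ above $\max(s)$, and by the assumption $b \subseteq \om \setminus (\max(s)+1)$ it is disjoint from $b$ altogether.

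No step is a serious obstacle here; the only mild subtlety is that part $(a)$ formally depends on $(b)$ for the c.c.c.\ (to know the generic filter meets every dense set in $V$), but this is purely a matter of ordering the argument.
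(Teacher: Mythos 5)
Your proof is correct and takes essentially the same approach as the paper's: in $(a)$ the generic real is the union of the stems and $G$ is recovered from it exactly as you write, in $(b)$ the key point is likewise that conditions with equal stems are compatible so the countably many stems bound antichains, and in $(c)$ the paper shrinks to $A\setminus(\max(b)+1)$ where you shrink to $A\setminus b$, an immaterial difference. One small correction: a generic filter meets every dense set lying in the ground model by the definition of genericity, so part $(a)$ does not depend on the c.c.c.\ at all.
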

\begin{proof} $(a)$ If $G$ is $\MPB_U$-generic over $V$, then let
$r=\bigcup\{s:\exists A, \lan s,A\ran\in G\}$. $r$ is a real and
$G=\{\lan s,A\ran\in \MPB_U:r$ end extends $s$ and $r\setminus
s\sse A\}$.

$(b)$ Trivial using the fact that for $\lan s,A\ran,\lan
t,B\ran\in \MPB_U$, if $s=t$ then $\lan s,A\ran$ and $\lan
t,B\ran$ are compatible.

$(c)$ Consider $\lan s,A\setminus (\max(b)+1)\ran$.
\end{proof}

We now define our main forcing notion.

\begin{definition} $p\in\MPB$ iff $p=\lan p_0,\lusim{p}_1\ran$ where\\
(1) $p_0\in\MPB_U$,\\
(2) $\lusim{p}_1$ is a $\MPB_U$-name such that for some
$\a<\om_1$, $p_0\vdash   \lusim{p}_1 : \a\lora\om
$ and such that
the following hold

(2a) For every $\b<\a$, $\lusim{p}_1(\b)\sse\MPB_U\times\om$ is a
$\MPB_U-$name for a natural number such

$\hspace{.65cm}$ that
\begin{itemize}
\item  $\lusim{p}_1(\b)$ is partial function from $\MPB_U$ into
$\om$, \item for some  fixed $l<\om$, $\dom
\lusim{p}_1(\b)\sse\{\lan s,\om \setminus \max(s)+1\ran:
s\in[\om]^l\}$, \item for all $\b_1\neq\b_2<\a, \range(\lusim{p}_1(\b_1))\cap \range(\lusim{p}_1(\b_2))$ is finite \footnote{Thus if $G$ and $r$ are as in the proof of Lemma 3.3 with $p_0\in G,$ then $p_o\vdash ``\lusim{p}_1(\b) $ is the $l$-th element of $r$''.}.
\end{itemize}

(2b) for every  $I\sse\a$, $I\in V$, $p'_0\leq p_0$ and
finite $J\sse\om$ there is a finite set

$\hspace{.8cm}$$a\sse\a$ such that for every finite set $b\sse
I\setminus a$ there is $p''_0\leq^* p'_0$ such

$\hspace{.8cm}$that $p''_0\vdash  (\forall \b\in b, \forall
k\in J, \lusim{p}_1(\b)\neq k) \& (\forall \b_1\neq\b_2\in b,
\lusim{p}_1(\b_1)\neq \lusim{p}_1(\b_2))$.
\end{definition}

\begin{notation} (1) Call $\a$ the length of $p$ (or $\lusim{p}_1$) and denote
it by $\len(p)$ (or $\len(\lusim{p}_1))$.

(2) For $n<\om$ let $\lusim{I}_{p,n}$ be a $\MPB_U$-name such that
$p_0\vdash  \lusim{I}_{p,n}=\{\b<\a:
\lusim{p}_1(\b)=n\}$. Then we can identify
$\lusim{p}_1$ with $\lan \lusim{I}_{p,n}:n<\om\ran$.
\end{notation}
\begin{remark} (2a) will guarantee that for $\b<\a$, $p_0\vdash
\lusim{p}_1(\b)\in\om $. The last condition in (2a) is
a technical fact that will be used in several parts of the
argument. The condition (2b) appears technical but it will be
crucial for producing numerous Cohen reals.
\end{remark}
\begin{definition} For $p=\lan p_0,\lusim{p}_1\ran, q=\lan
q_0,\lusim{q}_1\ran\in\MPB$, define

$(1)$ $p\leq q$ iff
\begin{itemize}
\item $p_0\leq_{\MPB_U} q_0$, \item $\len(q)\leq \len(p)$,

\item $p_0\vdash  \forall n<\om,
\lusim{I}_{q,n}=\lusim{I}_{p,n}\cap \len(q)$.
\end{itemize}

$(2)$ $p\leq^* q$ iff \begin{itemize} \item $p_0\leq^*_{\MPB_U}
q_0 $, \item $p\leq q$. \end{itemize}

we call $\leq^*$ a direct or $*$-extension.
\end{definition}
\begin{remark} In the definition of $p\leq q$, we can replace the last condition by
$p_0\vdash   \lusim{q}_1=\lusim{p}_1\upharpoonright \len(q)
$.
\end{remark}
\begin{lemma} Let $\lan p_0,\lusim{p}_1\ran\vdash `` \a$ is an
ordinal''. Then there are $\MPB_U$-names $\lusim{\b}$
and $\lusim{q}_1$ such that $\lan p_0,\lusim{q}_1\ran\leq^*\lan
p_0,\lusim{p}_1\ran$ and $\lan p_0,\lusim{q}_1\ran\vdash
\lusim{\a}=\lusim{\b}$.
\end{lemma}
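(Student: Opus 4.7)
The plan is to view $\MPB$ as the two-step iteration $\MPB_U * \lusim{\MQB}$, where $\lusim{\MQB}$ is the $\MPB_U$-name for the poset whose conditions are functions $p_1\func\alpha\to\om$ with $\alpha<\om_1$ satisfying clauses (2a)--(2b) of Definition~3.4, ordered by end-extension. Under this identification, the direct-extension order $\leq^*$ on $\MPB$ with the first coordinate held fixed at $p_0$ corresponds exactly to end-extension in $\lusim{\MQB}$. The heart of the argument is the structural claim that $(\lusim{\MQB},\leq^*)$ is (forced by $p_0$ to be) $\sigma$-closed in $V^{\MPB_U}$, for then the lemma reduces to the standard principle that ordinal names in $\sigma$-closed forcings are decided by direct extensions.

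Granting $\sigma$-closure, the proof is: in $V^{\MPB_U}$ below $p_0$ the $\MPB$-name $\lusim{\a}$ becomes a $\lusim{\MQB}$-name for an ordinal, so by the usual density argument one finds $q_1\leq^*p_1$ in $\MQB$ deciding $\lusim{\a}$. Its value lives in $V^{\MPB_U}$ and is thus represented by a $\MPB_U$-name $\lusim{\b}$. Mixing over a countable maximal antichain of $\MPB_U$ below $p_0$ (valid by the c.c.c.\ of $\MPB_U$, Lemma~3.3(b)) promotes $q_1$ and $\b$ to genuine $\MPB_U$-names $\lusim{q}_1,\lusim{\b}\in V$ with $\lan p_0,\lusim{q}_1\ran\leq^*\lan p_0,\lusim{p}_1\ran$ and $\lan p_0,\lusim{q}_1\ran\vdash\lusim{\a}=\lusim{\b}$.

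The main obstacle sits in establishing $\sigma$-closure. Condition (2a) passes to the limit of a decreasing chain $\lan\lusim{p}_1^n:n<\om\ran$ trivially, since each of its three clauses is a pointwise or pairwise assertion preserved under unions. Condition (2b), by contrast, is delicate: at the limit length $\alpha^\om=\sup_n\alpha^n$, any finite $b\sse I\setminus a$ lies in some $\alpha^{n(b)}$, so that the local witness $p_0''\leq^*p_0'$ can be extracted from (2b) at level $n(b)$; however, the finite exceptional sets $a_n\sse\alpha^n$ obtained at successive levels may grow unboundedly in $n$, so their union need not be finite and there is no obvious global finite $a$.

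The remedy, which is where the real work lies, is to build the decreasing chain not freely but via a fusion-style construction: one interleaves the ``$\lusim{\a}$-deciding'' dense sets used in the $\sigma$-closure step with bookkeeping of countably many parameter triples $(I,p_0',J)$, arranged so that at each stage $n$ the choice of $\lusim{p}_1^{n+1}$ keeps the $(2b)$-witness for the $n$-th scheduled triple inside an already-fixed finite set. Since only countably many dense sets are relevant for the fixed name $\lusim{\a}$ and the countably many triples needed to verify (2b) for $\lusim{p}_1^\om$, this can be carried out in $\om$ steps, and the resulting $\lusim{p}_1^\om$ furnishes the required $\lusim{q}_1$; its decision of $\lusim{\a}$ yields $\lusim{\b}$.
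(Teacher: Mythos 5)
Your reduction correctly isolates where the difficulty lives --- preservation of clause (2b) under $\om$-limits of the second coordinate --- but the remedy you propose does not close the gap, and it is precisely at this point that the paper's proof has its real content. First, the instances of (2b) cannot be handled by a countable bookkeeping: the clause quantifies over \emph{all} $p_0'\leq p_0$ in $\MPB_U$, of which there are continuum many, and, more fatally, the finite exceptional set $a$ you would commit to at stage $n$ for a given triple must also control the coordinates $\b\in[\a^n,\a^{\om})$ that have not yet been created at stage $n$; a finite set fixed at stage $n$ cannot anticipate them. What actually makes (2b) survive is not scheduling but the \emph{way the new coordinates are named}: when the length is increased, the names $\lusim{p}_1^s(\a_i)$ are built so that $\range(\lusim{p}_1^s(\a_i))\sse C_i$ for a fixed family $\lan C_i:i<\om\ran$ of pairwise disjoint sets, itself almost disjoint from the ranges of all earlier coordinates, and so that the value of $\lusim{p}_1^s(\a_i)$ is pushed above any prescribed bound by shrinking the measure-one set to a tail. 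This makes (2b) verifiable \emph{uniformly} for every $p_0'$, $I$ and $J$ (Claims 3.10 and 3.11), with the witness $p_0''$ always a tail-shrink of $p_0'$; no instance-by-instance bookkeeping is involved, and nothing resembling this construction appears in your sketch.

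Second, the linear $\leq^*$-chain plus ``mixing over a countable antichain'' misdescribes the shape of the fusion. Since $\leq^*$ fixes the stem of $p_0$, a deciding extension of $\lusim{\a}$ can in general only be found after committing to a particular finite extension $s$ of the stem; the proof therefore builds a whole tree $\lan p^s: s\in[\om]^{<\om}\ran$ of conditions, one for each potential stem, takes a deciding $*$-extension $t^s$ at a node only \emph{when one exists}, and amalgamates the second coordinates into the single name $\lusim{q}_1=\{\lan p_0^s,\lan\b,\lusim{p}_1^s(\b)\ran\ran : s\in[\om]^{<\om},\ \b<\len(p^s)\}$. This amalgam is the correct form of your ``mixing'', but it must itself be checked to be a legal name (again (2a), (2b)), and the final statement $\lan\lan<>,\om\ran,\lusim{q}_1\ran\vdash\lusim{\a}=\lusim{\b}$ requires its own compatibility argument (Claim 3.12) precisely because not every node admits a deciding $*$-extension. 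Your appeal to ``the usual density argument'' in $V^{\MPB_U}$ also quietly assumes that $\MPB$ is a genuine two-step iteration, which it is not: clauses (2a) and (2b) are properties of the \emph{name} $\lusim{p}_1$ computed in $V$, not of its realization in $V^{\MPB_U}$, so a condition produced by a density argument inside $V^{\MPB_U}$ need not come from an admissible name. In sum, the proposal identifies the obstacle but supplies neither the almost-disjoint-range construction nor the stem-indexed fusion that the proof actually needs.
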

\begin{proof} Suppose for simplicity that $\lan
p_0,\lusim{p}_1\ran=\lan\lan<>,\om\ran,\phi\ran$. Let $\theta$ be
large enough regular and let $\lan N_n:n<\om\ran$ be an increasing
sequence of countable elementary submodels of $H_\theta$ such that
 $\MPB$, $\lusim{\a}\in N_0$ and $N_n\in N_{n+1}$ for each $n<\om$.
Let $N=\ds\bigcup_{n<\om} N_n$, $\d_n=N_n\cap\om_1$ for $n<\om$
and $\d=\ds\bigcup_{n<\om}\d_n=N\cap \om_1$. Let $\lan
J_n:n<\om\ran\in N_0$ be a sequence of infinite subsets of
$\om\setminus\{0\}$ such that $\ds\bigcup_{n<\om}
J_n=\om\setminus\{0\},$ $J_n\sse J_{n+1},$ and $J_{n+1}\setminus
J_n$ is infinite for each $n<\om$.
 Also let
$\lan\a_i:0<i<\om\ran$ be an enumeration of $\d$ such that
 for
every $n<\om$, $\{\a_i:i\in J_n\}\in N_{n+1}$ is an enumeration of
$\d_n$ and $\{\a_i:i\in J_{n+1}\}\cap\d_n=\{\a_i:i\in J_n\}$.

~~~~~~~~~~~~~~~~~~~~~~~~~

We define by induction on the length of $s,$ a sequence $\lan p^s:s\in[\om]^{<\om}\ran$
of conditions such that

\begin{itemize}
\item $p^s=\lan p_0^s,\lusim{p}_1^s\ran=\lan \lan s,A_s \ran,
\lusim{p}_1^s \ran$, \item $p^s\in N_{s(\len(s)-1)+1}$, \item
$\len(p^s)=\d_{s(\len(s)-1)+1}$, \item if $t$ does not contradict
$p_0^s$ (i.e if $t$ end extends $s$ and $t\setminus s\sse A_S$)
then $p^t\leq p^s$.
\end{itemize}

For $s=<>$, let $p^{<>}=\lan\lan <>,\om\ran,\phi\ran$. Suppose
that $<>\neq s\in[\om]^{<\om}$ and                     $p^{s \upharpoonright
\len(s)-1}$ is defined. We define $p^s$. First we define $t^{s \upharpoonright
\len(s)-1}\leq^* p^{s \upharpoonright \len(s)-1}$ as follows: If there is no
$*$-extension of $p^{s \upharpoonright \len(s)-1}$ deciding $\lusim{\a}$ then
let $t^{s \upharpoonright \len(s)-1}=p^{s \upharpoonright \len(s)-1}$. Otherwise let $t^{s
\upharpoonright \len(s)-1}\in N_{s(\len(s)-2)+1}$ be such an extension. Note that
$\len(t^{s \upharpoonright \len(s)-1} ) \leq \d_{s(\len(s)-2)+1}$.

~~~~~~~~~~~~~~~~~~~~~~~~~~

Let $t^{s \upharpoonright \len(s)-1}=\lan t_0,\lusim{t}_1\ran, t_0=\lan s\upharpoonright \len(s)-1,A\ran$. Let $C\sse\om$ be an infinite set almost  disjoint to
$\lan \range(\lusim{t}_1(\b)):\b < \len (\lusim{t}_1)\ran$. Split $C$
into $\om$ infinite disjoint sets $C_i$, $i<\om$. Let $\lan
c_{ij}:j<\om\ran$ be an increasing enumeration of $C_i$, $i<\om$.
We may suppose that all of these is done in $N_{s(\len(s)-1)+1}$. Let
$p^s=\lan p_0^s,\lusim{p}_1^s\ran$, where

\begin{itemize}
\item $p_0^s=\lan s,A\setminus (\max(s)+1)\ran$, \item for
$\b<\len(\lusim{t}_1)$, $\lusim{p}_1^s(\b)=\lusim{t}_1(\b),$ \item
for $i\in J_{s(\len(s)-1)}$  such that $\a_i\in\d_{s(\len(s)-1)}\setminus \len(\lusim{t}_1)$
\end{itemize}
\begin{center}
$\lusim{p}_1^s(\a_i)=\left\{\lan\lan s^{\frown}\lan
r_1,...,r_i\ran,\om\setminus (r_i+1)\ran , c_{ir_i}\ran:r_1> \max(s),\lan r_1,...,r_i\ran\in [\om]^i\right\}.$
\end{center}

Trivially $p^s\in N_{s(\len(s)-1)+1}$, $\len(p^s)=\d_{s(\len(s)-1)}$, and
if $s(\len(s)-1)\in A$, then $p^s\leq t^{s \upharpoonright \len(s)-1}$.

\begin{claim} $p^s\in\MPB$. \end{claim}

\begin{proof} We check conditions in Definition 3.4.

~~~~~~~~~~~~~~~~~~~~~~

(1) i.e. $p_0^s\in\MPB_U$ is  trivial.

~~~~~~~~~~~~~~~~~~~~~~~

(2) It is clear that $p_0^s\vdash   \lusim{p}_1^s:
\d_{s(\len(s)-1)}\lora\om $ and that (2a) holds. Let us
prove (2b). Thus suppose that $I\sse \d_{s(\len(s)-1)}$, $I\in V$,
$p\leq p_0^s$ and $J\sse\om$ is finite. First we apply (2b) to
$\lan p, \lusim{t}_1\ran , I\cap \len(\lusim{t}_1)$, $p$ and $J$ to
find a finite set $a'\sse \len(\lusim{t}_1)$ such that

~~~~~~~~~~~~~~~~~~~~~~~~~~~~~~~~~~~~~~~~~~~~~~~~

$(*)$$\hspace{1cm}$ For every finite set $b\sse I\cap
\len(\lusim{t}_1)\setminus a'$ there is $p'\leq^* p$ such that

$\hspace{1.5cm}$$p' \vdash  (\forall \b\in b, \forall k\in
J, \lusim{t}_1(\b)\neq k) \& (\forall \b_1\neq\b_2\in b,
\lusim{t}_1(\b_1)\neq \lusim{t}_1(\b_2))$.

~~~~~~~~~~~~~~~~~~~~~~~~~~~~~~~~~~~~~~~~~~~~~~~~~~~~

Let $p=\lan s^{\frown}\lan r_1,...,r_m\ran,B\ran$. Suppose that
$\d_{s(\len(s)-1)}\setminus
\len(\lusim{t}_1)=\{\a_{J_1},...,\a_{J_i},...\}$ where $J_1<J_2<...$
are in $J_{s(\len(s)-1)}$. Let
\begin{center}
$a=a'\cup \{\a_{J_1},...,\a_{J_m}\}$.
\end{center}
 We show that $a$ is as required.
Thus suppose that $b\sse I\setminus a$ is finite. Apply $(*)$ to
$b\cap \len(\lusim{t}_1)$ to find $p'=\lan s^{\frown}\lan
r_1,...,r_m\ran,B'\ran\leq^* p$ such that

\begin{center}
 $p' \vdash
(\forall \b\in b\cap \len(\lusim{t}_1),  \forall k\in J,
\lusim{t}_1(\b)\neq k) \& (\forall \b_1\neq\b_2\in b \cap
\len(\lusim{t}_1), \lusim{t}_1(\b_1)\neq \lusim{t}_1(\b_2))
$.
\end{center}

Also note that

\begin{center}
$p' \vdash  \forall \b\in b\cap \len(\lusim{t}_1),
\lusim{p}_1^s(\b)=\lusim{t}_1(\b)$.
\end{center}

 Pick
$k<\om$ such that
\[\forall \b\in b\cap \len(\lusim{t}_1),  \forall \a_i\in b\setminus \len(\lusim{t}_1), \range(\lusim{p}_1^s(\b_1)) \cap (\range(\lusim{p}_1^s(\a_i))\setminus k)=\phi.\]

Let $q=\lan s^{\frown}\lan r_1,...,r_m\ran,B\ran=\lan s^{\frown}\lan
r_1,...,r_m\ran,B'\setminus(\max(J)+k+1)\ran$. Then $q \leq^*
p'\leq^* p$. We show that $q$ is as required. We need to show
that

\begin{enumerate}
\item $q \vdash   \forall \b\in b\setminus
\len(\lusim{t}_1), \forall k\in J, \lusim{p}_1^s(\b)\neq k
$, \item  $q \vdash  \forall \b_1\neq\b_2 \in b\setminus
\len(\lusim{t}_1), \lusim{p}_1^s(\b_1)\neq \lusim{p}_1^s(\b_2)
$, \item $q \vdash  \forall \b_1\in b\cap
\len(\lusim{t}_1), \forall \b_2\in b\setminus \len(\lusim{t}_1),
\lusim{p}_1^s(\b_1)\neq \lusim{p}_1^s(\b_2)$.
\end{enumerate}

Now (1) follows from the fact that $q \vdash ``
\lusim{p}_1^s(\a_i)\geq (i-m)$-th element of $B>\max(J)$''.
(2) follows from the fact that for $i\neq j<\om$, $C_i\cap
C_j=\emptyset$, and $\range(\lusim{p}_1^s(\a_i))\sse C_i$. (3) follows
from the choice of $k$. The claim follows.
\end{proof}

This completes our definition of the sequence $\lan
p^s:s\in[\om]^{<\om}\ran$. Let

\begin{center}
$ \lusim{q}_1=\{\lan p_0^s,\lan\b, \lusim{p}_1^s(\b)\ran\ran :
s\in[\om]^{<\om}, \b< \len(p^s)\}$.
\end{center}

 Then $
\lusim{q}_1$ is a $\MPB_U$-name and for $s\in[\om]^{<\om}$,
$p_0^s\vdash   \lusim{p}_1^s=\lusim{q}_1 \upharpoonright
\len(\lusim{p}_1^s)$.

\begin{claim} $\lan\lan <>,\om\ran,\lusim{q}_1\ran \in\MPB$.
\end{claim}

\begin{proof} We check conditions in Definition 3.4.

~~~~~~~~~~

 (1) i.e.
$\lan <>,\om\ran \in\MPB_U$ is  trivial.

~~~~~~~~~~~~~

 (2) It is clear from our
definition that

\begin{center}
 $\lan <>,\om\ran\vdash `` \lusim{q}_1$ is a
well-defined function into $\om$''.
\end{center}

Let us show that $\len(\lusim{q}_1)=\d$. By the construction it is
trivial that $\len(\lusim{q}_1)\leq\d$. We show that
$\len(\lusim{q}_1)\geq\d$. It suffices  to prove the following

\begin{center}
$(*)$~~ For every $\tau<\d$ and $p\in\MPB_U$ there is $q\leq p$
such that $q \vdash `` \lusim{q}_1(\tau)$ is defined
''.
\end{center}
Fix $\tau<\d$  and $p=\lan s,A\ran\in\MPB_U$ as in $(*)$. Let $t$ be
an end extension of $s$ such that $t\setminus s\sse A$ and
$\d_{t(\len(t)-1)}>\tau$. Then $p_0^t$ and $p$ are compatible and
$p_0^t \vdash ``  \lusim{q}_1(\tau)= \lusim{p}_1^t(\tau) $
is defined''. Let $q\leq p_0^t, p$. Then $q \vdash
`` \lusim{q}_1(\tau)$ is defined'' and $(*)$
follows. Thus $\len(\lusim{q}_1)=\d$.

~~~~~~

(2a) is trivial. Let us prove (2b). Thus suppose that $I\sse\d$,
$I\in V$, $p\leq \lan <>,\om\ran$ and $J\sse\om$ is finite. Let
$p=\lan s,A\ran$.

 First
we consider the case where $s=<>$. Let $a=\emptyset$. We show that
$a$ is as required. Thus let $b\sse I$ be finite. Let $n\in A$ be
such that $n>\max(J)+1$ and $b\sse\d_n$. Let $t=s^{\frown}\lan n\ran$.
Note that

\begin{center}
$\forall \b_1\neq\b_2\in b$, $\range(\lusim{p}_1^t(\b_1))\cap \range(\lusim{p}_1^t(\b_2))=\emptyset$.
\end{center}

 Let $q=\lan <>,B\ran=\lan
<>,A\setminus(\max(J)+1)\ran$. Then $q\leq^* p$ and $q$ is
compatible with $p_0^t$. We show that $q$ is as required. We need
to show that
\begin{enumerate}
\item $q \vdash   \forall \b\in b, \forall k\in J,
\lusim{q}_1(\b)\neq k $, \item  $q \vdash
\forall \b_1\neq\b_2 \in b, \lusim{q_1}(\b_1)\neq
\lusim{q}_1(\b_2)$.
\end{enumerate}

For (1), if it fails, then we can find $\lan r,D\ran\leq q,p_0^t$,
$\b\in b$ and $k\in J$ such that $\lan r,D\ran\leq^* p_0^r$ and
$\lan r,D\ran \vdash   \lusim{q}_1(\b)=k$. But
$\lan r,D\ran \vdash  \lusim{q}_1(\b)=
\lusim{p}_1^r(\b)= \lusim{p}_1^t(\b)$, hence $\lan
r,D\ran \vdash  \lusim{p}_1^t(\b)=k $. This is
impossible since $\min(D)\geq \min(B)>\max(J)$. For (2), if it fails,
then we can find $\lan r,D\ran\leq q, p_0^t$ and $\b_1\neq\b_2\in
b$ such that $\lan r,D\ran\leq^* p_0^r$ and $\lan r,D\ran \vdash
 \lusim{q}_1(\b_1)= \lusim{q}_1(\b_2) $. As
above it follows that $\lan r,D\ran \vdash
\lusim{p}_1^t(\b_1)= \lusim{p}_1^t(\b_2)$. This is
impossible since for $\b_1\neq\b_2\in b$, $\range(\lusim{p}_1^t(\b_1))\cap \range(\lusim{p}_1^t(\b_2))=\emptyset$. Hence
$q$ is as required and we are done.

~~~~~~

Now consider the case $s\neq<>$. First we apply (2b) to $t^s$,
$I\cap \len(t^s)$, $p$ and $J$ to find a finite set $a'\sse \len(t^s)$
such that

~~~~~~~~~~

$(**)$ $\hspace{1cm}$ For every finite set $b\sse I\cap
\len(t^s)\setminus a'$ there is $p'\leq^* p$ such that $p'$

$\hspace{1.8cm}$$\vdash  (\forall \b\in b, \forall k\in
J$, $\lusim{p}_1^s(\b)\neq k) \& (\forall \b_1\neq  \b_2\in b,
\lusim{p}_1^s(\b_1)\neq \lusim{p}_1^s(\b_2))$

~~~~~~

Let $t^s=\lan t_0,\lusim{t}_1\ran, \d_{s(\len(s)-1)+1}\setminus
\d_{s(\len(s)-1)}=\{\a_{J_1},\a_{J_2},...\}$, where $J_1<J_2<...$ are
in $J_{s(\len(s)-1)+1}$. Define

\begin{center}
$a=a'\cup\{\a_1,\a_2,...,\a_{J_{\len(s)+1}}\}$.
\end{center}

 We show that $a$ is
as required. First apply $(**)$ to $b\cap \len(t^s)$ to find $p'=\lan
s,A'\ran\leq^* p$ such that

\begin{center}
$p' \vdash   (\forall \b\in b\cap \len(t^s), \forall k\in
J,  \lusim{t}_1(\b)\neq k) \& (\forall \b_1\neq\b_2\in b\cap
\len(t^s), \lusim{t}_1(\b_1)\neq \lusim{t}_1(\b_2)) $.
\end{center}

 Pick $n\in A'$ such that $n>\max(J)+1$
and $b\sse\d_n$ and let $r=s^{\frown}\lan n\ran$. Then
\[\forall \b_1\neq\b_2\in b\setminus \len(t^s), \range(\lusim{p}_1^r(\b_1))\cap \range(\lusim{p}_1^r(\b_2))=\emptyset.\]
Pick $k<\om$ such that $k>n$ and
\[\forall \b_1\in b\cap \len(t^s),\forall \b_2\in b\setminus \len(t^s), \range(\lusim{p}_1^r(\b_1))\cap (\range(\lusim{p}_1^r(\b_2))\setminus k)=\emptyset.\]

Let $q=\lan s,B\ran=\lan s,A'\setminus(\max(J)+k+1)\cup\{n\}\ran$.
Then $q\leq^*p'\leq^* p$ and $q$ is compatible with $p_0^r$ (since
$n\in B$).
 We show that $q$ is as required. We need to prove the following
\begin{enumerate}
\item $q \vdash   \forall \b\in b,  \forall k\in J,
\lusim{q}_1(\b)\neq k $, \item  $q \vdash
\forall \b_1\neq\b_2 \in b\setminus \len(t^s),
\lusim{q}_1(\b_1)\neq \lusim{q}_1(\b_2)$, \item  $q
\vdash  \forall \b_1\in b\cap \len(t^s),  \forall \b_2\in
b\setminus \len(t^s), \lusim{q}_1(\b_1)\neq \lusim{q}_1(\b_2)
$.
\end{enumerate}
The proofs of (1) and (2) are as in the case $s=<>$. Let us prove
(3). Suppose that (3) fails. Thus we can find $\lan u,D\ran\leq
q,p_0^r$, $\b_1\in b\cap \len(t^s)$ and $\b_2\in b\setminus
\len(t^s)$ such that $\lan u,D\ran \leq^* p_0^u$ and $\lan u,D\ran
\vdash   \lusim{q}_1(\b_1)=\lusim{q}_1(\b_2)$.
But for $\b\in b$, $\lan u,D\ran \vdash
\lusim{q}_1(\b)=\lusim{p}_1^u(\b)= \lusim{p}_1^r(\b),$
 hence $\lan u,D\ran \vdash
\lusim{p}_1^r(\b_1)=\lusim{p}_1^r(\b_2)$. Now note that
$\b_2=\a_i$ for some $i>\len(s)+1$, $\min(D)\geq n$ and
$\min(D\setminus\{n\})>k$, hence by the construction of $p^r$

\begin{center}
$\lan u,D\ran \vdash ``  \lusim{p}_1^r(\b_2)\geq (i-\len(s))$-th element of $D>k $''.
\end{center}

 By our choice of $k$, $\range(\lusim{p}_1^r(\b_1))\cap (\range(\lusim{p}_1^r(\b_2))\setminus
k)=\emptyset $ and we get a contradiction. (3) follows. Thus $q$
is as required, and the claim follows.
\end{proof}

 Let

\begin{center}
$\lusim{\b}=\{\lan p_0^s,\d\ran: s\in[\om]^{<\om}, \exists \gamma
(\d<\gamma, p^s \vdash  \lusim{\a}=\gamma$$ )\}$.
\end{center}

Then $\lusim{\b}$ is a $\MPB_U$-name of an ordinal.

\begin{claim} $\lan\lan <>,\om\ran,\lusim{q}_1\ran \vdash
\lusim{\a}=\lusim{\b}$.
\end{claim}

\begin{proof} Suppose not. There are two cases to be considered.

~~~

 {\bf Case 1.} There are $\lan r_0, \lusim{r}_1\ran\leq \lan\lan <>,\om\ran,\lusim{q}_1\ran$ and $\d$ such that
$\lan r_0, \lusim{r}_1\ran \vdash ``  \d\in \lusim{\a}$ and
$\d\not\in \lusim{\b} $''. We may suppose that for some
ordinal $\a$, $\lan r_0, \lusim{r}_1\ran \vdash
\lusim{\a}=\a $. Then $\d<\a$. Let $r_0=\lan s,A\ran$.
Consider $p^s=\lan p_0^s, \lusim{p}_1^s\ran$. Then $p_0^s$ is
compatible with $r_0$ and there is a $*$-extension of $p^s$
deciding $\lusim{\a}$. Let $t\in N_{s(\len(s)-1)+1}$ be the
$*-$extension of $p^s$ deciding $\lusim{\a}$ chosen in the proof
of Lemma 3.9. Let $t=\lan t_0, \lusim{t}_1\ran, t_0=\lan s,B\ran$,
and let $\g$ be such that $\lan t_0, \lusim{t}_1\ran \vdash
 \lusim{\a}=\g$. Let $n\in A\cap B$. Then
\begin{itemize}
\item $p_0^{s^{\frown}\lan n\ran},t_0$ and $p_0^s$ are compatible and
$\lan s^{\frown}\lan n\ran, A\cap B\cap A_{s^{\frown}\lan n\ran}\ran$
extends them, \item $p^{s^{\frown}\lan n\ran}\leq t$.
\end{itemize}
Thus $p^{s^{\frown}\lan n\ran}\vdash  \lusim{\a}=\g
$. Let $u=\lan s^{\frown}\lan n\ran, A\cap B\cap A_{s^{\frown}\lan
n\ran}\setminus (n+1)\ran$.

Then $u\leq p_0^{s^{\frown}\lan n\ran}$ and $u\vdash ``
\lusim{r}_1$ extends $\lusim{p}_1^{s^{\frown}\lan n\ran}$ which
extends $\lusim{t}_1$''. Thus $\lan u, \lusim{r}_1\ran\leq
t, \lan r_0, \lusim{r}_1\ran, p^{s^{\frown}\lan n\ran}$. It follows
that $\a=\g$. Now $\d<\g$ and $p^{s^{\frown}\lan n\ran}\vdash
 \lusim{\a}=\g$. Hence $\lan p_0^{s^{\frown}\lan
n\ran},\d\ran\in \lusim{\b}$ and $p^{s^{\frown}\lan n\ran} \vdash
  \d\in \lusim{\b}$. This is impossible since
$\lan r_0, \lusim{r}_1\ran\vdash  \d \not\in
\lusim{\b}$.

~~~~~

{\bf Case 2.} There are $\lan r_0, \lusim{r}_1\ran\leq \lan\lan
<>,\om\ran,\lusim{q}_1\ran$ and $\d$ such that $\lan r_0,
\lusim{r}_1\ran\vdash ``  \d\in \lusim{\b}$ and $\d\not\in
\lusim{\a} $''. We may further suppose that for some
ordinal $\a$, $\lan r_0, \lusim{r}_1\ran\vdash
\lusim{\a}=\a $. Thus $\d\geq\a$. Let $r=\lan s,A\ran$.
Then as above $p_0^s$ is compatible with $r$ and there is a
$*$-extension of $p^s$ deciding $\lusim{\a}$. Choose $t$ as in
Case 1, $t=\lan t_0, \lusim{t}_1\ran, t_0=\lan s,B\ran$ and let
$\g$ be such that $\lan t_0, \lusim{t}_1\ran\vdash
\lusim{\a}=\g $. Let $n\in A\cap B$. Then as in Case 1,
$\a=\g$ and $p^{s^{\frown}\lan n\ran}\vdash  \lusim{\a}=\g
$. On the other hand since $\lan r_0,
\lusim{r}_1\ran\vdash  \d\in \lusim{\b}$, we
can find $\ov{s}$ such that $\ov{s}$ does not contradict
$p_0^{s^{\frown}\lan n\ran}$, $\lan p_0^{\ov{s}}, p_1^{\ov{s}}\ran
\vdash  \lusim{\a}= \ov{\g}, $ for some
$\ov{\g}>\d$ and $\lan p_0^{\ov{s}},\d\ran\in \lusim{\b}$. Now
$\ov{\g}=\g=\a>\d$ which is in contradiction with $\d\geq\a$. The
claim follows.
\end{proof}

This completes the proof of Lemma 3.9.
\end{proof}

\begin{lemma} Let $\lan p_0, \lusim{p}_1\ran\vdash  \lusim{f}: \om
\lora ON $. Then there are
 $\MPB_U$-names $\lusim{g}$ and $\lusim{q}_1$ such that $\lan p_0, \lusim{q}_1\ran \leq^* \lan p_0, \lusim{p}_1\ran$
 and $\lan p_0, \lusim{q}_1\ran\vdash   \lusim{f}= \lusim{g} $.
\end{lemma}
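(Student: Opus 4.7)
The strategy is to rerun the tree construction from the proof of Lemma 3.9, now modified so that at level $n+1$ of the tree one simultaneously $*$-decides $\lusim{f}(n)$. To set up, fix an increasing chain $\lan N_n : n < \omega\ran$ of countable elementary submodels of $H_\theta$ containing $\MPB$, $\lusim{f}$ and $\lan p_0, \lusim{p}_1\ran$, let $\delta_n = N_n \cap \omega_1$, $\delta = \bigcup_{n<\omega}\delta_n$, and fix, as in Lemma 3.9, a partition $\lan J_n : n<\omega\ran$ of $\omega\setminus\{0\}$ into nested infinite sets together with an enumeration $\lan \alpha_i : 0 < i < \omega\ran$ of $\delta \setminus \len(\lusim{p}_1)$ compatible with the submodel filtration.

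Next, build a tree $\lan p^s : s \in [\omega]^{<\omega}\ran$ of conditions extending $\lan p_0, \lusim{p}_1\ran$, satisfying the same structural properties as in Lemma 3.9: $p^s \in N_{s(\len(s)-1)+1}$, $\len(p^s) = \delta_{s(\len(s)-1)+1}$, and $p^t \leq p^s$ whenever $t$ does not contradict $p_0^s$. The only change is in the choice of $t^{s\upharpoonright (\len(s)-1)}$: now take it to be a $*$-extension of $p^{s\upharpoonright (\len(s)-1)}$ (lying in the appropriate submodel) which decides $\lusim{f}(\len(s)-1)$, as produced by Lemma 3.9 applied inside the submodel; if no such extension lies there, set $t^{s\upharpoonright (\len(s)-1)} = p^{s\upharpoonright (\len(s)-1)}$. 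Then obtain $p^s$ from $t^{s\upharpoonright (\len(s)-1)}$ exactly as in Lemma 3.9, by appending $s(\len(s)-1)$ to the stem and filling in $\lusim{p}_1^s$ on the new interval $[\len(\lusim{t}_1^{s\upharpoonright (\len(s)-1)}), \delta_{s(\len(s)-1)+1})$ via Cohen-like $\MPB_U$-names whose ranges lie in pairwise almost-disjoint infinite subsets $C_i \sse \omega$ chosen almost-disjointly from all previously used ranges.

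Amalgamate into $\lusim{q}_1 = \{\lan p_0^s, \lan \beta, \lusim{p}_1^s(\beta)\ran\ran : s \in [\omega]^{<\omega}, \beta < \len(p^s)\}$, and define a $\MPB_U$-name $\lusim{g}$ for a function on $\omega$ by declaring that, for each $n < \omega$ and each $s$ with $\len(s) > n$, the condition $p_0^s$ forces $\lusim{g}(n)$ to equal the unique $\gamma$ for which $p^s \vdash \lusim{f}(n) = \gamma$ (if such $\gamma$ exists). The verification that $\lan p_0, \lusim{q}_1\ran \in \MPB$ and that it $\leq^*$-extends $\lan p_0, \lusim{p}_1\ran$ proceeds by exactly the two claims in the proof of Lemma 3.9: $\len(\lusim{q}_1) = \delta$ follows from the same density argument, (2a) holds by construction, and the argument for (2b) (the case split on whether new $\beta$'s lie below or above $\len(\lusim{t}_1)$) goes through unchanged. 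The forcing statement $\lan p_0, \lusim{q}_1\ran \vdash \lusim{f}(n) = \lusim{g}(n)$ is then checked one $n$ at a time by the two-case argument from the final claim of Lemma 3.9, applied to $\lusim{f}(n)$ in place of $\lusim{\alpha}$. I expect the main obstacle to be, as before, the verification of (2b) for the amalgamated name $\lusim{q}_1$: the pairwise almost-disjointness of the range sets $C_i$ must survive across all $\omega$ levels, which requires the Cohen-like filling at each level to be performed inside a fresh almost-disjoint family outside the ranges used so far; given the single-level combinatorial juggling already carried out in Lemma 3.9, this should go through directly.
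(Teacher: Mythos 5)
Your overall architecture (a tree of conditions $\lan p^s\ran$ over a chain of countable submodels, fresh almost-disjoint range sets at each node, amalgamation into a single name $\lusim{q}_1$) is the same as the paper's, and the verification that $\lan\lan<>,\om\ran,\lusim{q}_1\ran\in\MPB$ does go through as you indicate. The gap is in how $\lusim{f}(n)$ gets decided. You attempt to decide $\lusim{f}(\len(s)-1)$ by a single $*$-extension $t^{s\restriction(\len(s)-1)}$ of $p^{s\restriction(\len(s)-1)}$, with the fallback ``if no such extension lies there, do nothing,'' and you then read $\lusim{g}(n)$ off from nodes $p^s$ that outright force $\lusim{f}(n)=\gamma$ for an actual ordinal $\gamma$. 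This imports the dichotomy from the \emph{interior} of the proof of Lemma 3.9, where it is harmless because the same single name $\lusim{\a}$ is re-attempted at \emph{every} node, so that in the final density argument any condition $\lan r_0,\lusim{r}_1\ran$ deciding $\lusim{\a}$ and having stem $s$ is itself a $*$-extension of $p^{s}$, witnessing that the attempt at $s$ succeeded. In your construction the coordinate $\lusim{f}(n)$ is attempted only once, at the passage from stems of length $n$ to length $n+1$. A condition forcing a value of $\lusim{f}(n)$ will in general have a stem of length $m>n+1$, and such a condition is \emph{not} a $*$-extension of $p^{s\restriction n}$; so you cannot conclude that the attempt at level $n+1$ succeeded, and if it failed the construction never returns to $\lusim{f}(n)$. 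Consequently $\lusim{g}(n)$ may simply never acquire a value along the generic branch, and the claim $\lan p_0,\lusim{q}_1\ran\vdash\lusim{f}=\lusim{g}$ is not established.

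The repair is exactly what the paper does: at each node $s$, first build the extended name $\lusim{q}_1^s$ (your Cohen-like filling), and then invoke Lemma 3.9 itself, applied inside $N_{s(\len(s)-1)+1}$ to the ordinal name $\lusim{f}(\len(s)-1)$, to obtain $\lusim{p}_1^s$ together with a $\MPB_U$-\emph{name} $\lusim{\b}_s$ such that $\lan q_0^s,\lusim{p}_1^s\ran\vdash\lusim{f}(\len(s)-1)=\lusim{\b}_s$. This reduction always succeeds --- no fallback clause is needed --- precisely because $\lusim{\b}_s$ is allowed to be a $\MPB_U$-name rather than a ground-model ordinal: all further stem-extensions needed to pin the value down are packaged inside $\lusim{\b}_s$. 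One then sets $\lusim{g}=\{\lan p_0^s,\lan i,\lusim{\b}_{s\restriction i+1}\ran\ran : s\in[\om]^{<\om},\ i<\len(s)\}$, so that $\lusim{g}(i)$ is the name $\lusim{\b}_{s\restriction i+1}$, and the final verification reduces to the observation that any $\lan r_0,\lusim{r}_1\ran$ below the amalgamated condition with stem $s$ already lies below $p^s$, which forces $\lusim{g}(i)=\lusim{\b}_{s\restriction i+1}=\lusim{f}(i)$ for all $i<\len(s)$. Your remaining concerns (maintaining almost-disjointness of the range sets across all levels by reserving fresh sets $C_{s,t}$ for all future $t$) are correctly identified and handled as you suggest.
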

 \begin{proof} For simplicity suppose that $\lan p_0, \lusim{p}_1\ran=\lan\lan<>,\om\ran,\emptyset\ran$.
 Let $\theta$ be large enough regular and let $\lan N_n:n<\om\ran$ be an increasing sequence of
 countable elementary submodels of $H_\theta$ such that $\MPB, \lusim{f}\in N_0$ and $N_n\in N_{n+1}$ for every $n<\om$.
 Let $N=\ds\bigcup_{n<\om} N_n$, $\d_n=N_n\cap\om_1$ for $n<\om$ and $\d=\ds\bigcup_{n<\om} \d_n=N\cap\om_1$.
 Let $\lan J_n:n<\om\ran\in N_0$ and
 $\lan\a_i: 0<i<\om\ran$ be as in Lemma 3.9.

 We define by induction a sequence $\lan p^s: s\in[\om]^{<\om}\ran$ of conditions and a
 sequence $\lan \lusim{\b}_s:s\in[\om]^{<\om}\ran$ of $\MPB_U$-names for ordinals such that

\begin{itemize}
\item $p^s=\lan p_0^{s},\lusim{p}_1^s\ran=\lan \lan s,\om
\setminus(\max(s)+1)\ran, \lusim{p}_1^s\ran$, \item $p^s\in
N_{s(\len(s)-1)+1}$, \item $\len(p^s)\geq \d_{s(\len(s)-1)}$, \item $p^s
\vdash ``  \lusim{f}(\len(s)-1)=\lusim{\b}_s$'', \item
if $t$ end extends $s$, then $p^t\leq p^s$.
\end{itemize}

 For $s=<>$, let $p^{<>}=\lan\lan<>,\om\ran,\emptyset\ran$. Now suppose that $s\neq <>$ and
 $p^{s\upharpoonright \len(s)-1}$ is defined. We define $p^s$. Let $C_{s\upharpoonright \len(s)-1}$ be an infinite subset of $\om$ almost
 disjoint to $\lan \range(\lusim{p}_1^{s \upharpoonright \len(s)-1}(\b)): \b < \len(p^{s \upharpoonright \len(s)-1})\ran$.
 Split $C_{s \upharpoonright \len(s)-1}$ into $\om$ infinite disjoint sets $\lan C_{s \upharpoonright \len(s)-1,t}: t\in[\om]^{<\om}$ and $t$ end
 extends $s \upharpoonright \len(s)-1\ran$. Again split $C_{s \upharpoonright \len(s)-1,s}$ into $\om$ infinite disjoint
 sets $\lan C_i:i<\om\ran$. Let $\lan c_{ij}: j<\om\ran$ be an
  increasing enumeration of $C_i$, $i<\om$. We may suppose that all of these is done
  in $N_{s(\len(s)-1)+1}$. Let $q^s=\lan q_0^s, \lusim{q}_1^s\ran$, where

\begin{itemize}
 \item $q_0^s=\lan s,\om\setminus(\max(s)+1)\ran$,
 \item for $\b < \len(p^{s \upharpoonright \len(s)-1}), \lusim{q}_1^s(\b)=\lusim{p}_1^{s \upharpoonright \len(s)-1}(\b)$, \item for $i \in J_{s(\len(s)-1)}$
 such that $\a_i \in \d_{s(\len(s)-1)}\setminus \len(p^{s \upharpoonright \len(s)-1})$
\begin{center}
$ \lusim{q}_1^s(\a_i)=\{ \lan\lan s^{\frown}\lan r_1,...,r_i\ran,
\om\setminus (r_i+1)\ran, c_{ir_i}\ran: r_1>\max(s), \lan
r_1,...,r_i\ran\in [\om]^i\}.$
\end{center}
\end{itemize}

 Then $q^s\in N_{s(\len(s)-1)+1}$ and as in the proof of claim 3.10, $q^s\in\MPB$. By Lemma 3.9,
 applied inside $N_{s(\len(s)-1)+1},$ we can find $\MPB_U$-names $\lusim{\b}_s$ and $\lusim{p}_1^s$ such
 that $\lan q_0^s, \lusim{p}_1^s\ran\leq \lan q_0^s, \lusim{q}_1^s\ran$
 and $\lan q_0^s, \lusim{p}_1^s\ran\vdash   \lusim{f}(\len(s)-1)=\lusim{\b}_s$.
 Let $p^s=\lan p_0^s, \lusim{p}_1^s\ran=\lan q_0^s, \lusim{p}_1^s\ran$. Then $p^s\leq p^{s \upharpoonright \len(s)-1}$ and $p^s \vdash
 \lusim{f} \upharpoonright \len(s)=\{ \lan i, \lusim{\b}_{s \upharpoonright i+1}\ran : i<\len(s)\}$.

 This completes our definition of the sequences $\lan p^s : s\in[\om]^{<\om}\ran$ and $\lan \lusim{\b}_s : s\in[\om]^{<\om}\ran$. Let
 \begin{eqnarray*}
  \lusim{q}_1 &=& \{ \lan p_0^s, \lan \b, \lusim{p}_1^s(\b)\ran\ran: s\in[\om]^{<\om}, \b< \len(p^s)\},\\
  \lusim{g} &=& \{\lan p_0^s, \lan i,\lusim{\b}_{s \upharpoonright i+1}\ran\ran: s\in[\om]^{<\om}, i< \len(s)\}.
 \end{eqnarray*}

 Then $\lusim{q}_1$ and $\lusim{g}$ are $\MPB_U$-names.

\begin{claim} $\lan\lan <>,\om \ran,\lusim{q}_1\ran\in\MPB$.
\end{claim}

 \begin{proof} We check conditions in Definition 3.4.

 ~~~~~~

(1) i.e $\lan <>,\om\ran\in\MPB_U$ is trivial.

~~~~~~

 (2) It is clear by our construction  that
 \begin{center}
$\lan <>,\om\ran \vdash ``  \lusim{q}_1$ is a well-defined
function''
\end{center}
and as in the proof of claim 3.11, we can show that
$\len(\lusim{q}_1)=\d$. (2a) is trivial. Let us prove (2b). Thus
suppose that $I\sse\d$, $I\in V$, $p\leq\lan <>,\om\ran$ and
$J\sse\om$ is finite. Let $p=\lan s,A\ran$. If $s=<>$, then as in
the proof of 3.11, we can show that $a=\emptyset$ is a required.
Thus suppose that $s\neq <>$. First we apply (2b) to $p^s$, $I\cap
\len(p^s)$, $p$ and $J$ to find $a'\sse \len(p^s)$ such that

~~~~~~

$(*)$ $\hspace{1cm}$ For every finite $b\sse I\cap
\len(p^s)\setminus a'$ there is $p'\leq^* p$ such that $p'$

$\hspace{1.6cm}$$ \vdash   (\forall \b\in b, \forall k\in
J, \lusim{p}_1^s(\b)\neq k) \& (\forall \b_1\neq\b_2\in b,
\lusim{p}_1^s(\b_1)\neq \lusim{p}_1^s(\b_2))$.

~~~~~

Let $\d_{s(\len(s)-1)+1}\setminus \d_{s(\len(s)-1)}=\{\a_{J_1},...,\a_{J_i},...\}$ where $J_1<J_2<...$ are in
$J_{s(\len(s)-1)+1}$. Let

\begin{center}
 $a=a'\cup \{\a_{1},\a_2,...,\a_{J_{\len(s)}}\}$.
\end{center}

 We  show that $a$ is as required. Let $b\sse I\setminus a$ be finite. First we apply $(*)$ to $b\cap \len(p^s)$
to find $p'=\lan s,A'\ran\leq^* p$ such that

\begin{center}
$p' \vdash  (\forall \b\in b \cap \len(p^s), \forall k\in
J, \lusim{p}_1^s(\b)\neq k) \& (\forall \b_1\neq\b_2\in b\cap
\len(p^s), \lusim{p}_1^s(\b_1)\neq \lusim{p}_1^s(\b_2))$.
\end{center}

 Also note
that for $\b\in b \cap \len(p^s)$, $p' \vdash
\lusim{q}_1(\b)=\lusim{p}_1^s(\b))$. Pick $m$ such that
$\max(s)+\max(J)+1<m<\om$ and if $t$ end extends $s$ and $m<\max(t)$,
then $C_{s,t}$ is disjoint to $J$ and to $\range(\lusim{p}_1^s(\b))$
for $\b\in b \cap \len(p^s)$. Then pick $n>m, n\in A'$ such that
$b\sse\d_n$, and let $t=s^{\frown}\lan n\ran$. Then
\begin{itemize}
\item $\forall\b_1\neq\b_2\in b\setminus \len(p^s)$, $\range(\lusim{p}_1^t(\b_1))\cap \range(\lusim{p}_1^t(\b_2))=\emptyset$, \item
$\forall\b_1\in b\cap \len(p^s),\forall\b_2\in b\setminus \len(p^s)$,
$\range(\lusim{p}_1^t(\b_1))\cap \range(\lusim{p}_1^t(\b_2))=\emptyset$,
\item $\forall\b\in b\setminus \len(p^s), \range(\lusim{p}_1^t(\b))\cap
J=\emptyset$.
\end{itemize}

Let $q=\lan s,B\ran=\lan s,A'\setminus (n+1)\ran$. Then $q\leq^*
p'\leq^* p$ and using the above facts we can show that

\begin{center}
 $q \vdash  (\forall \b\in b, \forall k\in J, \lusim{q}_1(\b)=\lusim{p}_1^t(\b)\neq k) \& (\forall \b_1\neq\b_2\in b,  \lusim{q}_1(\b_1)=\lusim{p}_1^t(\b_1)\neq \lusim{p}_1^t(\b_2)=\lusim{q}_1(\b_2))$.
\end{center}

 Thus $q$ is as required and the claim follows.
 \end{proof}

\begin{claim} $\lan\lan <>,\om \ran,\lusim{q}_1\ran \vdash
\lusim{f}=\lusim{g}$.
\end{claim}

 \begin{proof} Suppose not. Then we can find $\lan r_0, \lusim{r}_1\ran\leq \lan\lan <>,\om\ran,\lusim{q}_1\ran$
 and $i<\om$ such that $\lan r_0, \lusim{r}_1\ran \vdash    \lusim{f}(i)\neq \lusim{g}(i)$. Let $r_0=\lan s,A\ran$.
  Then $r_0$ is compatible with $p_0^s$ and $r_0\vdash ``  \lusim{r}_1$ extends $p_1^s$''.
  Hence $\lan r_0, \lusim{r}_1\ran\leq \lan
 p_0^s,\lusim{p}_1^s\ran=p^s$. Now $p^s \vdash   \lusim{g}(i)=\lusim{\b}_{s \upharpoonright i+1}=\lusim{f}(i),$ and
 we get a contradiction. The claim follows.
 \end{proof}

 This completes the proof of Lemma 3.13.
 \end{proof}

 The following is now immediate.

\begin{lemma} The forcing $(\MPB,\leq)$ preserves cofinalities.
\end{lemma}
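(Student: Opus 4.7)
The plan is to combine the Prikry-style reduction of Lemma~3.13 with a cardinality bound on $\MPB$ giving the $\aleph_2$-c.c., and to show that these two facts together cover every $V$-cofinality. I work under the hypothesis $V\models\GCH$ of Theorem~3.1.

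First I would estimate $|\MPB|$. We have $|\MPB_U|\leq 2^{\aleph_0}=\aleph_1$, since any condition in $\MPB_U$ is a pair $\langle s,A\rangle$ with $s\in[\omega]^{<\omega}$ and $A\subseteq\omega$. A name $\lusim{p}_1$ is a function from some $\alpha<\omega_1$ into the collection of $\MPB_U$-names for natural numbers of the restricted form in Definition~3.4(2a); each of those names is determined by a partial map from a fixed countable set into $\omega$, and hence lies in a collection of size at most $\aleph_1$. Therefore there are at most $\aleph_1^{<\omega_1}=\aleph_1$ possibilities for $\lusim{p}_1$, so $|\MPB|=\aleph_1$ and $\MPB$ has the $\aleph_2$-c.c. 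In particular $\MPB$ preserves all $V$-cardinals $\geq\omega_2$ and every $V$-cofinality $\geq\omega_2$.

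It remains to handle $\omega_1$ and $V$-cofinalities in $\{\omega,\omega_1\}$. Any $V$-cardinal $\kappa$ with $\cf^V(\kappa)=\omega$ trivially retains $\cf^{V[G]}(\kappa)=\omega$. For $\omega_1$ itself and for $V$-cardinals of $V$-cofinality $\omega_1$, the key observation is Lemma~3.13: any $\MPB$-name for a function $\omega\to ON$ admits a direct extension forcing it equal to a $\MPB_U$-name. Hence every $\omega$-sequence of ordinals in $V[G]$ lies in $V[G_U]$, where $G_U$ is the projection of $G$ onto $\MPB_U$. Since $\MPB_U$ is c.c.c.\ by Lemma~3.3(b), it preserves $\omega_1^V$ and every $V$-cofinality. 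A hypothetical surjection $\omega\to\omega_1^V$ in $V[G]$ would therefore already lie in $V[G_U]$, contradicting the preservation of $\omega_1$ there; and a hypothetical cofinal $\omega$-sequence in a $V$-cardinal $\kappa$ with $\cf^V(\kappa)=\omega_1$ would likewise lie in $V[G_U]$, contradicting the c.c.c.\ preservation of $\cf(\kappa)$. So $\omega_1$ is preserved and every $V$-cardinal of $V$-cofinality $\omega_1$ retains that cofinality in $V[G]$. The only mildly technical step is the size estimate for $\MPB$, which uses $\GCH$ of $V$; the cofinality conclusion itself is an immediate consequence of Lemma~3.13 together with the c.c.c.\ of $\MPB_U$.
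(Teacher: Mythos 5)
Your proof is correct and follows essentially the same two-step decomposition as the paper: Lemma~3.13 together with the c.c.c.\ of $\MPB_U$ handles cofinalities $\leq\omega_1$, while a chain condition on $\MPB$ handles cofinalities $\geq\omega_2$. The only (immaterial) difference is that you obtain the $\omega_2$-c.c.\ from the cardinality bound $|\MPB|=\aleph_1$ under $\CH$, whereas the paper gets it from a $\Delta$-system argument, which does not need $\CH$.
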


 \begin{proof} By Lemma 3.13, $\MPB$ preserves cofinalities $\leq\om_1$. On the other
  hand by a $\Delta$-system argument, $\MPB$ satisfies the $\om_2$-c.c. and hence it preserves cofinalities $\geq \om_2$.
  \end{proof}

\begin{lemma} Let $G$ be $(\MPB,\leq)$-generic over $V$. Then $V[G]\models
GCH$.
\end{lemma}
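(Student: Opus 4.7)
The plan is to verify GCH in $V[G]$ by splitting into two regimes: the continuum, handled via the reduction to Mathias forcing given by Lemma 3.13, and cardinals $\geq \aleph_1$, handled via the chain condition together with a size bound on $\MPB$.

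For CH in $V[G]$, first extract from $G$ the $\MPB_U$-generic filter $G_0 = \{p_0 : \exists \lusim{p}_1,\ \lan p_0,\lusim{p}_1\ran \in G\}$ and work inside the intermediate model $V[G_0] \subseteq V[G]$. Lemma 3.13 says that any $\MPB$-name for a function $\om \lora \On$ becomes, below some direct extension of any given condition, equal to a pure $\MPB_U$-name; a density argument then places every real of $V[G]$ already in $V[G_0]$. Since a condition of $\MPB_U$ is a pair of a finite set and a subset of $\om$, we have $|\MPB_U| \leq 2^{\aleph_0} = \aleph_1$ in $V$, and $\MPB_U$ is c.c.c.\ (Lemma 3.3); a standard count of nice $\MPB_U$-names for reals then yields $(2^{\aleph_0})^{V[G_0]} \leq \aleph_1^{\aleph_0} = \aleph_1$, so $(2^{\aleph_0})^{V[G]} = \aleph_1$.

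For $\k \geq \aleph_1$, I combine the $\aleph_2$-c.c.\ of $\MPB$ (Lemma 3.16) with a uniform size bound $|\MPB| \leq \aleph_2$ in $V$. To establish the latter, observe that by clause (2a) of Definition 3.4 each component $\lusim{p}_1(\b)$ is a partial function whose domain lies in the countable set $\{\lan s, \om\setminus(\max(s)+1)\ran : s \in [\om]^l\}$ for some fixed $l$, giving at most $\aleph_0^{\aleph_0} = \aleph_1$ such names; since $\lusim{p}_1$ has length $<\om_1$ and $p_0$ has only $\aleph_1$ choices, $|\MPB| \leq \aleph_1^{\aleph_1} = \aleph_2$ under GCH. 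Then the antichains of $\MPB$ number at most $|\MPB|^{\aleph_1} = \aleph_2$, and hence the nice $\MPB$-names for subsets of $\k$ number at most $\aleph_2^\k = \k^+$ (using GCH in $V$ and the fact that cardinals are preserved). This yields $(2^\k)^{V[G]} \leq \k^+$, as required.

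The main obstacle is justifying the bound $|\MPB| \leq \aleph_2$, which rests entirely on reading clause (2a) as a genuine uniform restriction on the shape of each $\lusim{p}_1(\b)$; without it, the number of admissible names for a single natural number would be too large and the argument for $2^{\aleph_1} = \aleph_2$ in $V[G]$ would collapse. Clause (2b), by contrast, is the engine of Lemmas 3.9 and 3.13 (and thus enters indirectly through CH via Lemma 3.13) but plays no direct role in the present counting.
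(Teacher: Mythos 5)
Your proof is correct and follows essentially the same route as the paper: CH is obtained from Lemma 3.13 (every $\om$-sequence of ordinals is forced equal to a $\MPB_U$-name on a dense set, so all reals live in the $\MPB_U$-extension, which is a small c.c.c.\ extension), and $2^\k=\k^+$ for $\k\geq\aleph_1$ is obtained by counting nice names using the $\aleph_2$-c.c.\ together with a bound on $|\MPB|$. The only difference is that you make explicit the cardinality bound $|\MPB|\leq\aleph_2$ coming from clause (2a) (in fact the count gives $|\MPB|\leq\aleph_1$, since there are only $\aleph_1^{|\a|}\leq\aleph_1$ sequences of names of each length $\a<\om_1$), which the paper leaves implicit in the inequality $((|\MPB|^{\om_1})^{\k})^V\leq(2^\k)^V$.
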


 \begin{proof} By Lemma 3.13, $V[G]\models CH$. Now let $\k\geq\om_1$. Then
 \[(2^\k)^{V[G]}\leq ((|\MPB|^{\om_1})^\k)^V\leq (2^\k)^V=\k^+.\]

 The result follows.
 \end{proof}

Now we return to the proof of Theorem 3.1. Suppose that $G$ is
$(\MPB,\leq)$-generic over $V$, and let $V_1=V[G]$. Then $V_1$ is
a cofinality and $GCH$ preserving generic extension of $V$. We
show that adding a Cohen real over $V_1$ produces $\aleph_1$-many
Cohen reals over $V$. Thus force to add a Cohen real over $V_1$.
Split it into $\om$ Cohen reals over $V_1$. Denote them by $\lan
r_{n,m}:n,m<\om\ran$. Also let $\lan f_i:i<\om_1\ran\in V$ be a
sequence of almost disjoint functions from $\om$ into $\om$. First
we define a sequence $\lan s_{n,i} : i<\om_1\ran$ of reals by

\begin{center}
 $\forall k<\om$, $s_{n,i}(k)=r_{n,f_i(k)}(0)$.
\end{center}

Let $\lan I_n : n<\om\ran$ be the partition of $\om_1$ produced by
$G$. For $\a<\om_1$ let

\begin{itemize}
\item $n(\a)=$ that $n<\om$ such that $\a\in I_n$, \item $i(\a)=$
that $i<\om_1$ such that $\a$ is the $i$-th element of
$I_{n(\a)}$.
\end{itemize}

 We define a sequence $\lan t_\a:\a<\om_1\ran$ of reals by $t_\a=s_{n(\a),i(\a)}$. The following
 lemma completes the proof of Theorem 3.1.

\begin{lemma} $\lan t_\a:\a<\om_1\ran$ is a sequence of $\aleph_1$-many
Cohen reals over $V$.
\end{lemma}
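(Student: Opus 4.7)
The plan is to exploit properties~(2a) and~(2b) of $\MPB$-conditions together with the Prikry property of $\MPB_U$. By the $\om_1$-c.c.\ of $\MCB(\om_1)$ in $V$, it suffices to show that for each countable $I\in V$ with $I\sse\om_1$, $\lan t_\a:\a\in I\ran$ is $\MCB(I)$-generic over $V$. So I would fix such $I$, a condition $(p,q)\in\MPB\times\MCB(\om\times\om)$, and an open dense $D\in V$ of $\MCB(I)$, and search for $(\ov p,\ov q)\leq(p,q)$ forcing $\lan\lusim{t}_\a:\a\in I\ran$ to extend some $b\in D$. By density I may assume $\len(p)>\sup I$, so $I\sse\len(p)$.

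Let $J_q=\{n<\om:(n,m,0)\in\dom(q)\text{ for some }m\}$, a finite set. Applying~(2b) to $(I,p_0,J_q)$ yields a finite exceptional set $a\sse\len(p)$ such that for every finite $b^*\sse I\setminus a$ some $*$-extension $p^*_0\leq^* p_0$ forces $\lusim{p}_1(\b)\neq k$ for all $\b\in b^*,\,k\in J_q$, and $\lusim{p}_1(\b_1)\neq\lusim{p}_1(\b_2)$ for distinct $\b_1,\b_2\in b^*$. Thus outside the finite $a$ the partition-classes $n(\b)=\lusim{p}_1(\b)$ can be jointly controlled by direct extensions.

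Next I would pick an arbitrary $b\in D$ with finite support $B\sse I$ and decompose $B=B_1\cup B_2$ with $B_1=B\cap a$ and $B_2=B\setminus a$. For the finite $B_1$, iteratively apply Lemma~3.9 to reduce $n(\b)$ and $i(\b)$ to $\MPB_U$-names, and then grow the Prikry stem of $p_0$ in $\MPB_U$ (a non-direct extension) to decide specific values $n_\b,\,i_\b$ for $\b\in B_1$; using the pairwise almost-disjointness of $\mathrm{range}(\lusim{p}_1(\b_1))$ and $\mathrm{range}(\lusim{p}_1(\b_2))$ guaranteed by~(2a), arrange these decisions so that the $n_\b$'s for $\b\in B_1$ come out pairwise distinct and outside $J_q$. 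For $B_2$ apply~(2b) to $(I,p^{(1)}_0,J_q\cup\{n_\b:\b\in B_1\})$ with $b^*=B_2$, iterating and enlarging $a$ finitely many times if needed so that $B_2$ remains outside the updated exceptional set (the iteration terminates because $B$ is finite), yielding a $*$-extension forcing the $n(\b)$'s for $\b\in B_2$ pairwise distinct and outside $J_q\cup\{n_\g:\g\in B_1\}$. Further grow the Prikry stem to decide each $i_\b$ for $\b\in B_2$ as well.

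Assuming $\lan f_i:i<\om_1\ran$ has been chosen in $V$ as an almost disjoint family of \emph{injections} $\om\to\om$ --- realizable in $V\models\GCH$ by taking increasing enumerations of an almost disjoint family of infinite subsets of $\om$ --- the collection $\{(n_\b,f_{i_\b}(k)):(\b,k)\in\dom(b)\}$ consists of pairwise distinct pairs in $\om\times\om$ (distinct $\b$'s give distinct first coordinates, and the same $\b$ with distinct $k$'s gives distinct second coordinates by injectivity of $f_{i_\b}$), and all have first coordinate outside $J_q$. Hence $\ov q:=q\cup\{\lan(n_\b,f_{i_\b}(k),0),b(\b,k)\ran:(\b,k)\in\dom(b)\}$ is a well-defined extension of $q$ in $\MCB(\om\times\om)$, and together with the final $\ov p\leq p$ built above, $(\ov p,\ov q)\Vdash t_\b(k)=r_{n_\b,f_{i_\b}(k)}(0)=b(\b,k)$ for every $(\b,k)\in\dom(b)$, so $\lan t_\a:\a\in I\ran$ extends $b\in D$, completing the argument. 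The main obstacle will be the management of $B_1$ --- ensuring via~(2a) that the Prikry-decided $n_\b$'s are pairwise distinct, and coordinating this with the iterated application of~(2b) so that the exceptional set $a$ stabilizes --- since this is precisely where the fine interplay between the name structure of~(2a) and the direct-extension flexibility of~(2b) must be carefully handled.
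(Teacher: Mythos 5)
Your overall frame --- reduce to a countable $I\in V$, apply (2b) to isolate a finite exceptional set $a$, steer the colors of the remaining components away from $J_q$ and away from each other by a $*$-extension, and then write down the Cohen-side condition --- is the same as the paper's. But there is a genuine gap in your treatment of $B_1=B\cap a$, located exactly where you flag ``the main obstacle''. You fix an arbitrary $b\in D$ first and then try to arrange that the colors $n_\b$ for $\b\in B_1$ come out pairwise distinct and outside $J_q$. This cannot be done in general: the exceptional set in (2b) exists precisely because for $\b\in a$ the value $\lusim{p}_1(\b)$ may already be completely decided by the condition (in the verification of (2b) in Claim 3.10 the ordinals $\a_{J_1},\dots,\a_{J_m}$ indexed by the already-fixed part of the stem have their colors determined outright), and the decided value may lie in $J_q$; likewise two ordinals of $a$ may have their colors decided to be \emph{equal}, since (2a) only makes the ranges of the names almost disjoint, not disjoint or nonconstant. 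Consequently $q$ may already force finitely many bits $t_\b(k)=q(n(\b),f_{i(\b)}(k),0)$ for $\b\in B_1$, and if your pre-chosen $b$ disagrees with any one of them, no extension of the pair $(p,q)$ can force $\lusim{t}_\b$ to extend $b(\b)$. For the same reason your collision count for $\ov{q}$ breaks down on $B_1$: distinct $\b$'s there need not give distinct first coordinates $n_\b$.

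The repair is to reverse the order of choices, which is what the paper does. First decide the colors and indices of the members of $a$, read off from $q$ the finite condition $S\in\MCB(I)$ that $q$ already imposes on $\lan t_\nu:\nu\in a\ran$, and pad it below a suitable $k_0$ to a condition $S^*$ fixing in addition all coordinates where collisions among members of $a$ could occur. Only \emph{then} use density of $D$ to choose $S^{**}\in D$ with $S^{**}\leq S^*$. The new components of $S^{**}$ automatically lie in $I\setminus a$, so a single application of the conclusion of (2b) (no iteration or re-enlargement of $a$ is needed) makes their colors pairwise distinct and disjoint from $J_q$, after which the extension $q^*$ of $q$ is well defined and forces $\lan\lusim{t}_\nu:\nu\in I\ran$ to extend $S^{**}$. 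Your handling of $B_2$ and the remark that the $f_i$ should be taken injective are fine and agree in substance with the paper.
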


 \begin{proof} First note that $\lan r_{n,m}:n,m<\om\ran$ is ${\MCB}(\om\times\om)$-generic
 over $V_1$. By c.c.c. of ${\MCB}(\om_1)$ it suffices to
  show that for every countable $I\sse\om_1$, $I\in V$, $\lan t_\a:\a\in I \ran$ is ${\MCB}(I)$-generic
   over $V$. Thus it suffices to prove the following

~~~~~~~~~~~

$\hspace{1.4cm}$~~~ For every $\lan\lan
p_0,\lusim{p}_1\ran,q\ran\in\MPB*{\MCB}(\om\times\om)$ and every
open dense subset

$(*)$$\hspace{1cm}$ $D\in V$ of ${\MCB}(I)$, there is $\lan\lan
q_0,\lusim{q}_1\ran,r\ran\leq \lan\lan p_0,\lusim{p}_1\ran,q\ran$
such that $\lan\lan q_0,\lusim{q}_1\ran$

$\hspace{1.5cm}$$,r\ran\vdash `` \lan \lusim{t}_\nu :
\nu\in I\ran$ extends some element of $D$''

~~~~~~

Let $\lan\lan p_0,\lusim{p}_1\ran,q\ran$ and $D$ be as above. Let
$\a=\sup(I)$. We may suppose that $\len(\lusim{p}_1)\geq\a$. Let
$J=\{n:\exists m,k,\lan n,m,k\ran\in \dom(q)\}$. We apply (2b) to
$\lan p_0,\lusim{p}_1\ran, I,p_0$ and $J$ to find a finite set
$a\sse I$ such that:

~~~~~~~

$(**)$ $\hspace{1cm}$~ For every finite $b\sse I\setminus a$
there is $p'_0\leq^* p_0$ such that $p'_0 \vdash
(\forall \b$

$\hspace{1.7cm}$$\in b, \forall k\in J, \lusim{p}_1(\b)\neq k) \&
(\forall \b_1 \neq \b_2 \in b, \lusim{p}_1(\b_1)\neq
\lusim{p}_1(\b_2)) $.

~~~~~~

Let
\begin{center}
$S=\{\lan\nu ,k,j\ran: \nu\in a, k<\om, j<2, \lan n(\nu),
f_{i(\nu)}(k),0,j\ran\in q\}$.
\end{center}
Then $S\in {\MCB}(\om_1)$. Pick $k_0<\om$ such that for all
$\nu_1\neq\nu_2\in a$, and $k\geq k_0$, $f_{i(\nu_1)}(k)\neq
f_{i(\nu_2)}(k)$. Let
\begin{center}
 $S^*=S\cup\{\lan \nu,k,0\ran:\nu\in a, k<\k_0$, $\lan\nu,k,1\ran \not\in S\}$.
\end{center}
 The reason for defining $S^*$ is to avoid possible collisions.
 Then $S^*\in {\MCB}(\om_1)$. Pick $S^{**}\in D$ such that $S^{**}\leq S^*$.
 Let $b=\{\nu:\exists k,j, \lan \nu,k,j\ran\in S^{**}\}\setminus q$. By $(**)$
 there is $p'_0\leq^* p_0$ such that

\begin{center}
 $p'_0 \vdash  (\forall
\nu\in b, \forall k\in J, \lusim{p}_1(\nu)\neq k) \& (\forall
\nu_1\neq\nu_2\in b, \lusim{p}_1(\nu_1)\neq  \lusim{p}_1(\nu_2))
$.
\end{center}

 Let $p''_0\leq p'_0$ be such that $\lan p''_0,\lusim{p}_1\ran$ decides all the colors of elements of $a\cup b$. Let
\[q^*=q\cup \{ \lan n(\nu), f_{i(\nu)}(k),0,S^{**}(\nu,k)\ran: \langle \nu,k \rangle\in \dom(S^{**})\}.\]
Then $q^*$ is well defined and $q^*\in {\MCB}(\om\times \om)$. Now
$q^*\leq q$, $\lan\lan p''_0,\lusim{p}_1\ran,q^*\ran\leq \lan\lan
p_0,\lusim{p}_1\ran,q\ran$ and for $\lan\nu,k\ran\in \dom(S^{**})$
\begin{center}
$\lan\lan p''_0,\lusim{p}_1\ran,q^*\ran \vdash
S^{**}(\nu,k)=q^*(n(\nu), f_{i(\nu)}(k),0)=\lusim{r}_{n(\nu),
f_{i(\nu)}(k)}(0)=\lusim{t}_\nu(k)$.
\end{center}

It follows that
\begin{center}
$\lan\lan p''_0,\lusim{p}_1\ran,q^*\ran \vdash`` \lan
\lusim{t}_\nu: \nu\in I\ran$ extends $S^{**}$''.
\end{center}

$(*)$ and hence Lemma 3.18 follows.
\end{proof}

\noindent{\large\bf Acknowledgement}

The authors would like to thank the anonymous referee for carefully reading the paper and offering extensive
improvement to its exposition.

Moti Gitik,
School of Mathematical Sciences, Tel Aviv University, Tel Aviv, Israel.

email: gitik@post.tau.ac.il

Mohammad Golshani,
Kurt G\"odel Research Center for Mathematical Logic (KGRC),

$\hspace{.0cm}$Vienna.

email: golshani.m@gmail.com

\end{document}